\numberwithin{equation}{section}
\def\N{\mathbb N}
\def\A{{\mathcal A}}
\def\B{{\mathcal B}}
\def\C{{\mathcal C}}
\def\P{{\mathcal P}}
\def\LR{{\mathcal{LR}}}
\def\AC{{\mathcal{AC}}}
\def\BO{{\mathcal {BO}}}
\def\u{{\mathbf u}}
\def\Ret{{\mathcal R}}
\renewcommand{\L}[1][]{\mathcal{L}_{#1}(\mathbf{u})}
\def \PE {{\mathcal{PE}}}
\def \L {\mathcal{L}(\mathbf{u})}
\def \Rk#1 {$\mathcal{R}_{#1}$}
\def \Rext {{\rm {Rext}}}
\def \Lext {{\rm {Lext}}}
\def \Bext {{\rm {Bext}}}
\def \Pext {{\rm {Pext}}}
\def \b {{\rm {b}}}
\def \FC#1 {
\mathcal{C}
\ifthenelse{\equal{#1}{}}{}{(#1)}
}
\def \PC#1 {
\mathcal{P}
\ifthenelse{\equal{#1}{}}{}{(#1)}
}
\newtheorem{thm}{Theorem}
\newtheorem{coro}[thm]{Corollary}
\newtheorem{corollary}[thm]{Corollary}
\newtheorem{lem}[thm]{Lemma}
\newtheorem{proposition}[thm]{Proposition}
\newtheorem{defi}[thm]{Definition}
\theoremstyle{definition}
\newtheorem{example}[]{Example}
\theoremstyle{remark}
\crefname{thm}{theorem}{Theorems}
\crefname{thrm}{theorem}{Theorems}
\crefname{coro}{corollary}{Corollaries}
\crefname{example}{example}{Examples}
\crefname{lem}{lemma}{Lemmas}
\crefname{lmm}{lemma}{Lemmas}
\crefname{claim}{claim}{Claims}
\crefname{obs}{obsertvation}{Observations}
\crefname{proposition}{proposition}{Propositions}
\crefname{prop}{proposition}{Propositions}
\crefname{defi}{definition}{Definitions}
\def\pf{\begin{proof}}
\def\pfk{\end{proof}}
\begin{document}
\title{Sturmian Jungle (or Garden?) on Multiliteral Alphabets}
\author{L\!'ubom\'ira Balkov\'a}
\author{Edita Pelantov\'a}
\author{\v St\v ep\'an Starosta}
\address{Department of Mathematics, FNSPE, Czech Technical
University in Prague, Trojanova 13, 120~00 Praha~2, Czech Republic}

\date{\today}
\email{lubomira.balkova@fjfi.cvut.cz, edita.pelantova@fjfi.cvut.cz, staroste@fjfi.cvut.cz}

\maketitle
\begin{abstract}
The properties characterizing Sturmian words are considered for
words on multiliteral alphabets.  We
summarize various generalizations of Sturmian words to
multiliteral alphabets and enlarge the list of  known
relationships among these generalizations. We also collect  many
examples of infinite words to illustrate differences in  the
generalized definitions of Sturmian words.
\end{abstract}

\section{Introduction}
Sturmian words, i.e., aperiodic words with the lowest factor complexity, appeared first in the paper of Hedlund and Morse in 1940.
Since then Sturmian words have been in the center of interest of many mathematicians and the number of discoveries of new properties and connections keeps growing.
The charm of Sturmian words consists in their natural appearance while studying diverse problems.
Many equivalent definitions have been found that way.
Sturmian words are binary and
every property characterizing Sturmian words asks for a fruitful extension to an analogy on a larger alphabet.
Well-known examples of such efforts are Arnoux-Rauzy words, words coding interval exchange transformations, or billiard words.
All these words belong to well established classes and their descriptions and properties can be found in many works~\cite{ArRa, Ke, Ra, FeZa, ArMaShTa, Barysh, Ta}.
An overview of some generalizations of Sturmian words is provided in~\cite{Be} and~\cite{Vu2}.


The aim of this paper is to attract attention to other generalizations of Sturmian words.
Our motivation stems from recent results on palindromes in infinite words
that have ended in the definition of words rich in palindromes, the definition of defect,
the description of a relation between factor and palindromic complexity, etc.~\cite{ABCD, BrHaNiRe, BaMaPe}.
Impulses for such an intensive research of palindromes come concededly from the article~\cite{DrPi}
which characterizes Sturmian words by palindromes, the article~\cite{DrJuPi}
which investigates the number of palindromes in prefixes of infinite words
and last, but not least, the discovery of the role of palindromes in description of the spectrum of
Schr\"{o}dinger operators with aperiodic potentials~\cite{HoKnSi}.
While generalizing Sturmian words we have taken into consideration the characterization of Sturmian words by return words from~\cite{Vu}
and a recent definition of Abelian complexity~\cite{RiSaZa, RiSaZa2},
which is closely connected with balance properties.

We consider the following properties ($k$ denotes the cardinality of alphabet $\A$):
\begin{enumerate}
\item{Property $\mathcal C$}:

the factor complexity of $\u$ satisfies
${\mathcal C}(n)=(k-1)n+1$ for all $n \in \mathbb N$.
\item{Property $\LR$}:

$\u$ contains one left special and one right special factor of every length.
\item {Property $\BO$}:

all bispecial factors of $\u$ are ordinary.

\item{Property $\Ret$}:

any factor of $\u$ has exactly $k$ return words.
\item{Property $\P$}:

the palindromic complexity of $\u$ satisfies
${\P}(n)+{\P}(n+1)=k+1$ for all $n \in \mathbb N$.
\item{Property $\PE$}:

every palindrome has a~unique palindromic extension in $\u$.
\item{Balance properties}:
\begin{enumerate}
\item{Property $\B_{\forall}$}:

$\u$ is aperiodic and for all $a \in {\A}$ and for all factors $w,v \in \L$ with $|w|=|v|$ it holds
$$||w|_a-|v|_a|\leq k-1.$$
\item{Property $\B_{\exists}$}:

$\u$ is aperiodic and there exists $a \in {\A}$ such that for all factors $w,v \in \L$ with $|w|=|v|$ it holds
$$||w|_a-|v|_a|\leq k-1.$$
\item{Property $\AC$}:

$\u$ is aperiodic and the abelian complexity of $\u$ satisfies $\AC(n)=k$ for all $n \in \N, \ n \geq 1$.
\end{enumerate}
\end{enumerate}

%

All properties are equivalent on a binary alphabet and they characterize Sturmian words.
No two of them are equivalent on the set of infinite words over a multiliteral alphabet.
The non-equivalence is shown by counterexamples.
However some properties imply others, or it can be shown that a couple of properties are equivalent on a certain class of infinite words.
For instance, on the class of uniformly recurrent ternary words Properties $\Ret$ and $\BO$ are equivalent.

There exist more equivalent definitions of Sturmian words, for instance the definition based on balance properties of subfactors of factors~\cite{FaVu},
on the index of an infinite word~\cite{MaPe}, or Richomne's characteristics of Sturmian words~\cite{Ri}.
We do not pay attention to these definitions in our survey.


The paper is organized as follows.
In \cref{sec:not_def} we recall the notions
playing an important role in the definitions of Properties 1 through 7.
We recall the notion of substitution which is irrelevant for the generalizations of Sturmian words
but is used to construct most of examples of infinite words.
\Cref{sec:opulent} is focused on the study of palindromes in infinite words:
we summarize older and new results concerning palindromes, we define palindromic branches.
A new result in this section is \Cref{thm:minus1} providing a~new characterization of rich words by means of bilateral orders.
\Cref{sec:equiv} shortly summarizes essential results on Sturmian words.
\Cref{generalizedSturm} is devoted to an overview of known relations among different generalizations of Sturmian words,
mostly from articles~\cite{BaMaPe, BuLuGlZa, GlJuWiZa, BaPeSt,RiSaZa, RiSaZa2}.
New results are in \Cref{th:mr1,th:mr2,PErichThenRet,biinf}.
The last section is a brief summary of selected relations and examples illustrating the studied Properties.


\section{Notations and definitions} \label{sec:not_def}
By $\A$ we denote a~finite set of symbols, usually called
{\em letters}; the set $\A$ is therefore called an {\em
alphabet}. A~finite string $w=w_0w_1\ldots w_{n-1}$ of letters of
$\A$ is said to be a~{\em finite word}, its length is
denoted by $|w| = n$. Finite words over $\A$ together with
the operation of concatenation and the empty word $\varepsilon$ as
the neutral element form a~free monoid $\A^*$. The map
$$w=w_0w_1\ldots w_{n-1} \quad \mapsto \quad \overline{w} =
w_{n-1}w_{n-2}\ldots w_{0}$$ is a~bijection on $\mathcal{A}^*$,
the word $\overline{w}$ is called the {\em reversal} or the {\em
mirror image} of $w$. A~word $w$ which coincides with its mirror
image is a~{\em palindrome}.

Under an {\em infinite word} $\u$ over the alphabet $\A$ we understand an infinite string
$\u=u_0u_1u_2\ldots $ of letters from $\mathcal{A}$ such that every letter of $\A$ occurs in $\u$.
We call an infinite word $\u$ {\em eventually periodic} if there exist finite words $w, v$ such that
$\u=wv^{\omega}$, where $\omega$ means `repeated infinitely many times'.
If $w=\varepsilon$, then $\u$ is said to be {\em (purely) periodic}.
If $\u$ is not eventually periodic, then we call $\u$ {\em aperiodic}.

A~finite word $w$ is a~{\em factor} of a~word $v$ (finite or
infinite) if there exist words $w^{(1)}$ and $w^{(2)}$ such that
$v= w^{(1)}w w^{(2)}$. If $w^{(1)} = \varepsilon$, then $w$ is said
to be a~{\em prefix} of $v$, if $w^{(2)} = \varepsilon$, then $w$ is
a~{\em suffix} of~$v$. We say that a~prefix or a~suffix is {\em proper} if it is not equal
to the word itself.

The {\em language} $\mathcal{L}(\u)$ of an infinite word
$\u$ is the set of all its factors. The factors of
$\u$ of length $n$ form the set denoted by
$\mathcal{L}_n(\u)$. Using this notation, we may write
$\mathcal{L}(\u)=\cup_{n\in
\mathbb{N}}\mathcal{L}_n(\u)$.

We say that the language
$\mathcal{L}(\u)$ is {\em closed under reversal} if
$\mathcal{L}(\u)$ contains with every factor $w$ also its
reversal $\overline{w}$.

An infinite word $\u$ over $\A$ is called $c$-{\em balanced}
if for every $a \in \A$ and for every
pair of factors $w$, $v$ of $\u$ of the same length
$|w|=|v|$, we have $\left||w|_{a}-|v|_{a}\right|\leq c$, where $|w|_a$
means the number of letters $a$ contained in $w$.
Note that in the case of a~binary alphabet, say ${\A}=\{0,1\}$, this condition may be rewritten in a~simpler way: an
infinite word $\u$ is $c$-{\em balanced}, if for every pair of
factors $w$, $v$ of $\u$ with $|w|=|v|$, we have
$\left||w|_0-|v|_0\right|\leq c$. We call $1$-balanced words
simply {\em balanced}.

We say that two words $w,v \in \A^{*}$ are {\em abelian equivalent} if
for each letter $a \in \A$, it holds $|w|_a=|v|_a$.
It is easy to see that the abelian equivalence defines indeed an equivalence relation on $\A^{*}$.
If $\A=\{a_1,a_2, \dots, a_k\}$, then the {\em Parikh vector} associated with the word $w \in \A^{*}$ is defined as
$$\Psi(w)=(|w|_{a_1}, |w|_{a_2}, \dots, |w|_{a_k}).$$
We call {\em abelian complexity} (as defined in \cite{RiSaZa2}) of an infinite word $\u$ the function $\AC: \N \to \N$ given by
$$\AC(n)=\# \{\Psi(w) \bigm | w \in \mathcal{L}_n(\u)\}.$$

For any factor $w\in \mathcal{L}(\u)$, there exists an
index $i$ such that $w$ is a prefix of the infinite word
$u_iu_{i+1}u_{i+2} \ldots$. Such an index $i$ is called an {\em
occurrence} of $w$ in $\u$. If each factor of $\u$
has at least two occurrences in $\u$, the infinite word
$\u$ is said to be {\em recurrent}.
It can be easily shown that each factor of a recurrent word occurs infinitely many times.
It is readily seen to see that if the language of $\u$ is closed under reversal, then
$\u$ is recurrent.
The infinite word $\u$ is said to be {\em uniformly recurrent}
if for any factor $w$ of $\u$ the distances between
successive occurrences of $w$ form a~bounded sequence.

Let $j,k$, $j<k$, be two successive occurrences
of a~factor $w$ in~$\u$. Then $u_ju_{j+1}\dots u_{k-1}$ is
called a~{\em return word} of
$w$. Return words were first studied in~\cite{Du} and~\cite{HoZa}.
The set of all return words of $w$ is denoted by $R(w)$,
\begin{equation*}\label{set_of_ret_words}
R(w)=\{u_ju_{j+1}\dots u_{k-1}\mid j,k \mbox{ being successive
occurrences of } w \mbox{ in }\u\}.
\end{equation*}
If $v$ is a~return word of $w$, then the word $vw$ is called a~{\em
complete return word} of $w$.
It is obvious that an infinite recurrent word is
uniformly recurrent if and only if the set of return words of any of
its factors is finite.

The {\em (factor) complexity} of an infinite word $\u$ is the
map $\mathcal{C}: \mathbb{N} \mapsto \mathbb{N}$, defined by
$\mathcal{C}(n)=\# \mathcal{L}_n(\u)$. To
determine the increment of complexity, one has to
count the possible {extensions} of factors of length~$n$. A~{\em
left extension} of $w \in \L$ is any letter
$a\in \A$ such that $aw \in \L$.
The set of all left extensions of a~factor $w$ will be denoted by
$\Lext(w)$. We will mostly deal with recurrent infinite words $\u$.
In this case, any factor of $\u$ has at least one left extension.
A~factor $w$ is called {\em left special} (or LS for short) if $w$ has
at least two left extensions. Clearly, any prefix of a~LS factor is LS as well. It makes therefore sense to define an {\em infinite LS branch} which is an infinite word whose all prefixes are LS factors of $\u$.
Similarly, one can define a~{\em right extension}, a~{\em right
special} (or RS) factor, $\Rext(w)$, and an {\em infinite RS branch} which is a~left-sided infinite word whose all suffixes are RS factors of $\u$.

We say that a~factor $w$ of $\u$ is
a~{\em bispecial} (or BS) factor if it is both RS and LS.
The role of BS factors for the computation of
complexity can be nicely illustrated on Rauzy graphs.

Let $\u$ be an infinite word and $n\in\N$. The {\em Rauzy graph}
$\Gamma_n$ of $\u$ is a~directed graph whose set of vertices is
${\mathcal L}_n(\u)$ and set of edges is ${\mathcal
L}_{n+1}(\u)$. An edge $e\in{\mathcal L}_{n+1}(\u)$ starts
in the vertex $w$ and ends in the vertex $v$ if $w$ is a prefix
and $v$ is a suffix of $e$, see Figure~\ref{f}.
\begin{figure}[ht]
\begin{center}
\begin{picture}(220,30)
\put(50,20){\circle*{5}} \put(210,20){\circle*{5}}
\put(60,20){\vector(1,0){140}}
\put(10,8){$w=w_0w_1\cdots w_{n-1}$} \put(170,8){$v=w_1\cdots
w_{n-1}w_n$} \put(82,25){$e=w_0w_1\cdots w_{n-1}w_n$}
\end{picture}
\end{center}
\caption{Incidence relation between an edge and vertices in a
Rauzy graph.} \label{f}
\end{figure}
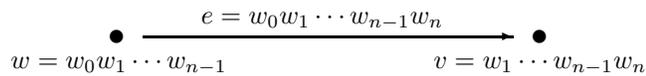
If the word $\u$ is  recurrent, the graph $\Gamma_n$ is
strongly connected for every $n\in\N$, i.e., there exists a~directed
path from every vertex $w$ to every vertex $v$ of the
graph.

If the language $\L$ of the infinite word $\u$ is closed under
reversal, then  the operation that to every vertex $w$ of the
graph associates its mirror image, the vertex $\overline{w}$, and to every edge $e$
associates $\overline{e}$ maps the Rauzy graph $\Gamma_n$ onto
itself.

The {\em outdegree} ({\em indegree}) of a~vertex $w\in{\mathcal
L}_n(\u)$ is the number of edges which start (end) in $w$.
Obviously the outdegree of $w$ is equal to $\#\Rext(w)$ and
the indegree of $w$ is $\#\Lext(w)$.
The sum of outdegrees over all vertices is equal to the number of
edges in every directed graph. Similarly, it holds for indegrees.
In particular, for the Rauzy graph $\Gamma_n$ we have
$$
\sum_{w\in{\mathcal L}_{n}(\u)}\#\Rext(w) \ =\
\mathcal{C}(n+1) \ = \sum_{w\in{\mathcal L}_{n}(\u)}\#\Lext(w)\,.
$$
The first difference of complexity $\Delta\C(n) = \mathcal{C}(n+1) - \mathcal{C}(n)$ is thus given by
\begin{equation*}\label{eq:delta}
\Delta\C(n) = \sum_{w\in{\mathcal L}_{n}(\u)}\bigl(\#\Rext(w) - 1 \bigr) \ = \sum_{w\in{\mathcal
L}_{n}(\u)}\bigl(\#\Lext(w) - 1 \bigr)\,.
\end{equation*}
A non-zero contribution to $\Delta\C(n)$ in the left-hand sum is given only
by those factors $w\in{\mathcal L}_n(\u)$ for which
$\#{\Rext}(w)\geq 2$, and for recurrent words, a non-zero contribution to
$\Delta\C(n)$ in the right-hand sum is provided only
by those factors $w\in{\mathcal L}_n(\u)$ for which
$\#{\Lext}(w)\geq 2$.  The last relation can
be thus rewritten for recurrent words $\u$ as
$$
\Delta\C(n) = \sum_{w\in{\mathcal L}_{n}(\u),\
\hbox{\scriptsize $w$ RS}\hspace*{-0.5cm}}\hspace*{-0.3cm}\bigl(\#{\Rext}(w) - 1
\bigr) \  = \sum_{w\in{\mathcal L}_{n}(\u),\
\hbox{\scriptsize $w$ LS}\hspace*{-0.5cm}}\hspace*{-0.3cm}\bigl(\#\Lext(w) - 1
\bigr)\,.
$$

If we denote ${\rm Bext(w)}=\{awb \in \L \bigm | a,b \in {\A}\}$,
then the second difference of complexity $\Delta^2\C(n) = \Delta \C(n+1)-\Delta \C(n)=\C(n+2) -2\C(n+1)+ \mathcal{C}(n)$ is given by
\begin{equation}\label{second_diff}
\Delta^2\C(n) = \sum_{w\in{\mathcal L}_{n}(\u)}\bigl(\#{\rm Bext}(w) - \#{\Rext(w)}-\#{\Lext(w)}+1
\bigr) \,.
\end{equation}

Denote by $\b(w)$ the quantity $$\b(w):= \# {\Bext}(w) - \# {\Rext}(w)-\# \Lext(w) +
1.$$ The number $\b(w)$ is called the {\em bilateral order} of the
factor $w$ and was introduced in ~\cite{Ca}. It is readily seen that if $w$ is not a~BS
factor, then $\b(w)=0$. Bispecial factors are distinguished
according to their bilateral order in the following way
\begin{itemize}
\item if $\b(w)>0$, then $w$ is a~{\em strong} BS factor,
\item if $\b(w)<0$, then $w$ is a~{\em weak} BS factor,
\item if $\b(w)=0$  then $w$ is an {\em ordinary} BS factor.
\end{itemize}

A~{\em substitution} on $\A$ is a~morphism $\varphi:{\A^{*}} \rightarrow {\A^{*}}$ such that there exists a~letter $a
\in \A$ and a~non-empty word $w \in {\A}^{*}$ satisfying
$\varphi(a)=aw$ and $\varphi(b) \not = \varepsilon$ for all $b \in \A$.
Since a~morphism satisfies $\varphi(vw)=\varphi(v)\varphi(w)$ for all $v,w
\in {\A^{*}}$, any substitution is uniquely determined by the images
of letters. Instead of classical $\varphi(a)=w$, we sometimes write
$a \to w$. A~substitution can be naturally extended to an infinite
word $\u=u_0u_1u_2\dots$ by the prescription
$\varphi(\u)=\varphi(u_0)\varphi(u_1)\varphi(u_2)\dots$ An infinite
word $\u$ is said to be a~{\em fixed point} of the substitution
$\varphi$ if it fulfills $\u=\varphi(\u)$. It is obvious that every
substitution $\varphi$ has at least one fixed point, namely $\lim_{n
\to \infty}\varphi^{n}(a)$ (to be understood in the sense of
product topology).


\section{Words opulent in palindromes} \label{sec:opulent}
In resemblance to the factor complexity ${\mathcal C}(n)$ of an infinite
word $\u$, let us define the
{\em palindromic complexity} of $\u$ as the map ${\mathcal P}:
\mathbb N \to \mathbb N$ given by
\begin{equation*}\label{PalCompl}
{\mathcal P}(n)=\# \{w \in {\mathcal L}_n(\u)| \ w=\overline{w}\}.
\end{equation*}

If $a \in {\A}$ and $w$
is a~palindrome and $awa \in \L$, then $awa$ is said to be a~{\em palindromic extension} of $w$.
The set of all palindromic extensions of $w$ is denoted by ${\rm Pext}(w)$.


Similarly as in the case of left special and right special branches, one can define a~{\em palindromic branch} of $\u$.

\begin{defi}
Let $\u$ be an infinite word.
A both-sided infinite word $v = \ldots v_3 v_2 v_1 v_1 v_2 v_3 \ldots $ is a~palindromic branch with center $\varepsilon$ of the word $\u$
if for every $n \in \N$ the word $v_nv_{n-1} \ldots v_2v_1v_1v_2 \ldots v_{n-1}v_n$ is a factor of $\u$.
Let $a$ be a letter.
A both-sided infinite word $v = \ldots v_3 v_2 v_1 a v_1 v_2 v_3 \ldots $ is a~palindromic branch with center $a$ of the word $\u$
if for every $n \in \N$ the word $v_nv_{n-1} \ldots v_2v_1av_1v_2 \ldots v_{n-1}v_n$ is a factor of $\u$.
\end{defi}

It follows from the K\"{o}nig's theorem that if $\u$ has infinitely many palindromes then $\u$ has at least one palindromic branch.
In any Sturmian word on $\left \{ 0,1 \right \}$ there exist exactly three palindromic branches with centers $\varepsilon$, $0$ and $1$.
See also Section~\ref{WellKnown}.

Uniformly recurrent words containing infinitely many distinct palindromes satisfy
that for any factor~$w$, every sufficiently large palindrome in $\u$ contains $w$, thus such a~palindrome contains $\overline w$ as well.
As a~consequence, we have the following theorem.
\begin{thm}\label{unif_rec_pal}
If $\u$ is a~uniformly recurrent word that contains infinitely many distinct
palindromes, then its language $\L$ is closed under reversal.
\end{thm}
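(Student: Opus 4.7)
The plan is to make the hint preceding the theorem statement precise. Fix an arbitrary factor $w \in \L$; the goal is to produce $\overline{w}$ as a factor of $\u$. The argument will hinge on sandwiching $w$ inside a sufficiently long palindrome, then exploiting the palindrome structure to extract $\overline{w}$.

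First I would invoke uniform recurrence to obtain a length $N = N(w)$ such that every factor of $\u$ of length at least $N$ contains $w$ as a factor. Concretely, since $\u$ is uniformly recurrent, the gaps between successive occurrences of $w$ in $\u$ are bounded by some constant $B$, and then $N := B + |w|$ works. Second, since $\u$ contains infinitely many distinct palindromes and there are only finitely many factors of any given length, there exists a palindrome $p \in \L$ with $|p| \geq N$. By the choice of $N$, the word $w$ occurs in $p$, i.e.\ $p = x w y$ for some $x, y \in \A^*$.

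Finally, applying the reversal $\overline{\cdot}$ and using $p = \overline{p}$ gives
$$p \;=\; \overline{p} \;=\; \overline{x w y} \;=\; \overline{y}\,\overline{w}\,\overline{x},$$
so $\overline{w}$ is a factor of $p$, and hence $\overline{w} \in \L$. Since $w$ was arbitrary, $\L$ is closed under reversal.

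There is no real obstacle here; the only ingredient that requires care is the quantitative consequence of uniform recurrence (every long enough factor contains $w$), which is a standard immediate consequence of the boundedness of gaps between successive occurrences. Everything else is the one-line palindrome trick $p = \overline{p} \Rightarrow \overline{w} \sqsubset p$.
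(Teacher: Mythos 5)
Your proposal is correct and is precisely the argument the paper sketches in the sentence preceding the theorem: uniform recurrence forces every sufficiently long factor (hence every sufficiently long palindrome, which exists since there are infinitely many) to contain $w$, and a palindrome containing $w$ contains $\overline{w}$. Your write-up merely makes the quantitative bound $N = B + |w|$ explicit; there is no difference in approach.
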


The opposite implication is not true as illustrated by the following example.
\begin{example}[uniform recurrence + closeness under reversal $\not \Rightarrow$ infinitely many palindromes] \label{ex:finite_pals}
The infinite word $\u$ on $\{a,b\}$ (constructed in~\cite{BeBoCaFa}) whose prefixes $u_n$ are given by the following recurrent formula
$$u_0=ab, \quad u_{n+1}=u_n ab \overline{u_n},$$
is uniformly recurrent and its language is closed under reversal. However, $\u$ contains only a~finite
number of palindromes.
\end{example}

When we relax the condition of uniform recurrence, the statement of Theorem~\ref{unif_rec_pal} is not true any more.
\begin{example}[infinitely many palindromes $\not \Rightarrow$ closeness under reversal] \label{ex:pals_not_closed}

The infinite word $\u$ on $\{a,b,c\}$ whose prefixes $u_n$ are given by the following recurrent formula
$$u_0=\varepsilon, \quad u_{n+1}=u_n ab c^{n+1} u_n$$
is clearly recurrent.
Infinitely many palindromes are represented by the factors $c^n$ for every $n$.
As the factor $ba$ does not occur, the set of factors is not closed under reversal.

The word $\u$ may be recoded to a~binary alphabet while preserving the mentioned properties.
We may for instance recode $\u$ using the following mapping:
$$ a \rightarrow 0110, \ b \rightarrow 1001, \ c \rightarrow 1.$$
\end{example}

An interesting relation between the palindromic and factor complexity has been revealed in~\cite{BaMaPe}.
\begin{thm}\label{Balazi_fac_pal}
Let $\u$ be an infinite word with the language $\L$ closed under reversal. Then
\begin{equation}\label{fac_pal}
\P(n+1)+\P(n)\leq \Delta \C(n)+2 \quad \text{for all $n \in \N$}.
\end{equation}
\end{thm}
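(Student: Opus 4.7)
The plan is to encode the problem on the Rauzy graph $\Gamma_n$ and exploit the reversal symmetry. Since $\mathcal L(\u)$ is closed under reversal, $\u$ is recurrent (as noted in \Cref{sec:not_def}), so every $\Gamma_n$ is strongly connected. Moreover, the mirror map $w\mapsto\overline{w}$ is an involution $\sigma$ on $\mathcal L_n(\u)$ and on $\mathcal L_{n+1}(\u)$ which reverses edge orientations; palindromes of length $n$ (resp.\ $n+1$) are exactly the $\sigma$-fixed vertices (resp.\ edges) of $\Gamma_n$.

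First I would split vertices and edges according to $\sigma$-orbits. Writing $\hat V$ and $\hat E$ for the number of orbits of size two on vertices and edges, respectively, one has
$$\C(n)=\P(n)+2\hat V\qquad\text{and}\qquad\C(n+1)=\P(n+1)+2\hat E,$$
so the claim $\P(n)+\P(n+1)\le\Delta\C(n)+2$ is equivalent to
$$\hat E\ \ge\ \P(n)+\hat V-1.$$

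Next I would form the undirected quotient graph $G$ whose vertex set is the set of $\sigma$-orbits on $\mathcal L_n(\u)$ (so $|V(G)|=\P(n)+\hat V$) and whose edge set is the set of $\sigma$-orbits on $\mathcal L_{n+1}(\u)$. Strong connectivity of $\Gamma_n$ together with $\sigma$ being an anti-automorphism makes $G$ connected, so $G$ admits a spanning tree with at least $|V(G)|-1$ non-loop edges. The key geometric observation is that every palindromic edge $e=e_0e_1\cdots e_n$ of $\Gamma_n$ has source $e_0\cdots e_{n-1}$ and target $e_1\cdots e_n=\overline{e_0\cdots e_{n-1}}$, so every palindromic edge descends to a loop in $G$. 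Consequently every non-loop edge of $G$ comes from a $\sigma$-orbit of two distinct non-palindromic edges, and the number of non-loop edges of $G$ is at most $\hat E$. Comparing with the spanning-tree bound gives $\hat E\ge\P(n)+\hat V-1$, which is the desired inequality.

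The main point requiring care is the bookkeeping around loops of $G$. A non-palindromic edge from some $w$ to $\overline{w}$ with $w\ne\overline{w}$ also descends to a loop, so one cannot hope for an equality between $\hat E$ and the non-loop count of $G$; phrasing the argument as an upper bound by $\hat E$ sidesteps this. Beyond that, deducing connectivity of the quotient $G$ from strong connectivity of $\Gamma_n$ and the involutive anti-automorphism $\sigma$ is a routine graph-theoretic step.
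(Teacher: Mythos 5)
Your argument is correct. The paper itself does not prove this theorem --- it only cites it from \cite{BaMaPe} (remarking that the proof there, stated for uniformly recurrent words, in fact only needs recurrence) --- so there is no in-paper proof to match; but your route is essentially the known one: pass to the Rauzy graph $\Gamma_n$, use that closure under reversal gives recurrence and hence connectivity of $\Gamma_n$, and exploit the reversal involution $\sigma$. Your bookkeeping checks out: $\C(n)=\P(n)+2\hat V$, $\C(n+1)=\P(n+1)+2\hat E$ reduce the claim to $\hat E\ge \P(n)+\hat V-1$; a palindromic edge $e_0\cdots e_n$ runs from $e_0\cdots e_{n-1}$ to its reversal and so descends to a loop in the quotient, so every non-loop edge of the quotient comes from a two-element edge orbit, and connectivity of the quotient (inherited from undirected connectivity of $\Gamma_n$, with or without the anti-automorphism) forces at least $|V(G)|-1$ non-loop edges. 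Your caveat about non-palindromic edges $w\to\overline w$ also descending to loops is exactly the right thing to flag, and stating the count as an upper bound by $\hat E$ handles it. The only cosmetic remark is that strong connectivity and the anti-automorphism property of $\sigma$ are not actually needed for connectivity of $G$; plain connectivity of the underlying undirected graph suffices.
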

In fact, the above relation is stated in~\cite{BaMaPe} for
uniformly recurrent words, however the proof requires only recurrent words.
Theorem~\ref{Balazi_fac_pal} implies that infinite words reaching the equality in~\eqref{fac_pal} are in a~certain sense
opulent in palindromes. Another measure of opulence in palindromes has been provided in~\cite{DrJuPi}.
\begin{thm}\label{max_w+1}
Every finite word $w$ contains at most $|w|+1$ palindromes (including the empty word).
\end{thm}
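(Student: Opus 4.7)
The plan is to prove the bound by induction on $|w|$, tracking the number of ``new'' palindromic factors that appear when we extend a word by one letter on the right. The base case $w = \varepsilon$ is immediate since $\varepsilon$ itself contributes the one allowed palindrome.

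For the inductive step, write $w = w'a$ with $a \in \A$, and let $\Pext(\cdot)$-free notation $\Pal(x)$ denote the set of palindromic factors of a finite word $x$. Every palindromic factor of $w$ either already lies in $w'$ or else it is a palindromic suffix of $w$ (since the only factors of $w$ that are not factors of $w'$ are those ending at the last position). By the induction hypothesis $|\Pal(w')| \leq |w'| + 1 = |w|$, so it suffices to show that at most one palindromic suffix of $w$ is genuinely new, i.e., does not already occur as a factor of $w'$.

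The key lemma is the following: let $s$ be the longest palindromic suffix of $w$ (which exists since the single letter $a$ is already a palindromic suffix). I claim that every palindromic suffix $t$ of $w$ with $|t| < |s|$ is in fact a factor of $w'$. Indeed, since $t$ is a suffix of $s$, the reversal $\overline{t}$ is a prefix of $\overline{s} = s$; but $t$ is itself a palindrome, so $t = \overline{t}$ is a prefix of $s$. This prefix occurrence of $t$ inside $s$ sits at position $n - |s|$ and ends at position $n - |s| + |t| - 1$, which is strictly less than $n - 1$ because $|t| < |s|$. Hence this occurrence of $t$ lies entirely within $w'$, showing $t \in \Pal(w')$. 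Consequently the only palindromic suffix of $w$ that can fail to be in $\Pal(w')$ is $s$ itself.

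Combining, $|\Pal(w)| \leq |\Pal(w')| + 1 \leq |w| + 1$, completing the induction. The only nontrivial ingredient is the prefix/suffix swap for palindromes used in the key lemma; everything else is bookkeeping. I expect no real obstacle beyond stating this swap cleanly.
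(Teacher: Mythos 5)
Your proof is correct and is exactly the classical Droubay--Justin--Pirillo argument that the paper cites for this theorem (the paper itself gives no proof): appending a letter creates at most one new palindrome, namely the longest palindromic suffix, because any shorter palindromic suffix $t$ of $w$ satisfies $t=\overline{t}$ and hence reappears as a proper prefix of the longest palindromic suffix $s=\overline{s}$, an occurrence lying strictly inside $w'$. The prefix/suffix swap and the induction are both stated cleanly; no gaps.
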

\begin{defi}\label{rich}
An infinite word $\u$ satisfying that every factor $w$ of $\u$ contains
$|w|+1$ palindromes is called rich in palindromes.
\end{defi}

The following equivalent definitions of richness have been proved in~\cite{GlJuWiZa}, \cite{BuLuGlZa}, \cite{BuLuGlZa2}, respectively.
\begin{thm}\label{equiv_rich}
For any infinite word $\u$ the following conditions are equivalent:
\begin{enumerate}
\item $\u$ is rich,
\item any return word of a~palindromic factor of $\u$ is a~palindrome,
\item for any factor $w$ of $\u$, every factor of $\u$ that contains $w$ only as its prefix and $\overline w$ only as its suffix is a~palindrome,
\item each factor of $\u$ is uniquely determined by its longest palindromic prefix and its longest palindromic suffix.
\end{enumerate}
\end{thm}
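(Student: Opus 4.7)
The proof goes via a cycle of implications, using as its main tool the following characterization of rich finite words (a direct consequence of Theorem~\ref{max_w+1}): a finite word $z$ contains $|z|+1$ palindromic factors if and only if, for every nonempty prefix $p$ of $z$, the longest palindromic suffix of $p$ occurs in $p$ exactly once. This equivalence follows by observing that each one-letter extension $p \mapsto pa$ introduces at most one new palindromic factor, namely the longest palindromic suffix of $pa$, and it introduces one precisely when that suffix is unioccurrent in $pa$. I would first prove this as a preliminary lemma and then establish the cycle $(1) \Rightarrow (2) \Rightarrow (3) \Rightarrow (4) \Rightarrow (1)$.

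For $(1) \Rightarrow (2)$: given a palindromic factor $w$ and a return word $v$, the complete return word $vw$ has $w$ both as prefix and as suffix, and by definition of a return word no other occurrences of $w$ lie inside $vw$. Apply the preliminary lemma to $vw$: its longest palindromic suffix must be unioccurrent, yet $w$ is a palindromic suffix of $vw$ that occurs at least twice, so the longest palindromic suffix of $vw$ strictly extends $w$ and must equal $vw$ itself. Hence $vw$ is a palindrome, and then $\overline{vw}=vw$ combined with $\overline w = w$ yields the desired palindromic structure on $v$ via a short conjugacy argument.

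For $(2) \Rightarrow (3)$: let $z$ be a factor with $w$ occurring only as its prefix and $\overline w$ only as its suffix. I would locate a sufficiently large palindromic factor $p$ of $z$ straddling the center and exhibit $z$ as (a piece of) a complete return word of $p$; condition (2) then forces $z$ itself to be a palindrome. For $(3) \Rightarrow (4)$: if two factors $z_1,z_2$ share the same longest palindromic prefix $p$ and the same longest palindromic suffix $q$, then the maximality of $p$ forces $p$ to be unioccurrent as a prefix of each $z_i$ (a further occurrence could be reflected to produce a strictly longer palindromic prefix), and similarly $\overline p = p$ is unioccurrent as a suffix. Applying (3) with $w=p$, both $z_1$ and $z_2$ are palindromes, so each is its own longest palindromic prefix; length comparison with $p$ gives $z_1 = p = z_2$.

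For $(4) \Rightarrow (1)$: I would proceed by contrapositive. If some factor of $\u$ fails to contain $|w|+1$ palindromes, the preliminary lemma produces a prefix whose longest palindromic suffix $s$ has two distinct occurrences, and then between suitably chosen occurrences of $s$ one constructs two distinct factors with the same longest palindromic prefix and the same longest palindromic suffix, contradicting (4). The main obstacle I anticipate is in $(2) \Rightarrow (3)$: cleanly identifying the palindromic sub-factor of $z$ whose complete return word witnesses $z$, since one has to control how palindromes nest inside a factor bounded by $w$ on one end and $\overline w$ on the other. This delicate combinatorial bridge between the return-word picture and the prefix/suffix-boundary picture is what gives the theorem its depth.
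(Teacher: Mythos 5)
First, note that the paper does not prove \cref{equiv_rich} at all: it imports the four characterizations from \cite{GlJuWiZa,BuLuGlZa,BuLuGlZa2}, so there is no in-paper argument to compare yours against; your proposal has to stand on its own. Its solid part is the preliminary lemma (the Droubay--Justin--Pirillo unioccurrence criterion) and the deduction that every \emph{complete} return word $vw$ of a palindrome $w$ is a palindrome: the argument that a proper palindromic suffix $s$ of $vw$ with $|s|>|w|$ would begin with $\overline w=w$ and hence force a third occurrence of $w$ inside $vw$ is exactly right. But you should stop there. Condition (2) as printed must be read as ``complete return word'': taken literally it is false even for the Fibonacci word, where the palindrome $a$ has the non-palindromic return word $ab$ (only the complete return $aba$ is a palindrome). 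Your closing ``short conjugacy argument'' to transfer palindromicity from $vw$ to $v$ cannot exist, because $vw=w\overline v$ does not imply $v=\overline v$ --- the same Fibonacci example refutes it.

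The remaining three arrows have genuine gaps. For $(2)\Rightarrow(3)$ you offer only the hope of finding a palindrome ``straddling the center'' of $z$ whose complete return word is $z$; when $w$ is non-palindromic there is no reason such a palindrome exists, and you rightly flag this as the weak point --- but that means the implication is unproved. (The known route instead derives (1) from (2) and then proves $(1)\Rightarrow(3)$ via the longest palindromic suffix $s$ of $z$: if $|s|\ge|\overline w|$ then $s$ begins with $w$, forcing $s=z$; the case $|s|<|\overline w|$ needs a separate argument.) In $(3)\Rightarrow(4)$, the unioccurrence of the longest palindromic prefix $p$ can indeed be extracted from (3) (apply (3) to the prefix of $z_i$ ending at a putative second occurrence of $p$, with $w=p$), but the next step is invalid: applying (3) with $w=p$ to all of $z_i$ requires $z_i$ to \emph{end} with $\overline p=p$, and the longest palindromic prefix is in general not a suffix of $z_i$ at all (consider $z_i=abc$, $p=a$). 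So the conclusion that $z_1,z_2$ are palindromes does not follow; a correct proof must play $p$ against the longest palindromic suffix $q$ jointly. Finally, in $(4)\Rightarrow(1)$ the contrapositive produces a prefix whose longest palindromic suffix $s$ occurs twice, but you still have to exhibit \emph{two distinct} factors sharing both their longest palindromic prefix and their longest palindromic suffix; the natural candidates ($s$ itself and the factor spanned by two consecutive occurrences of $s$) need not have $s$ as longest palindromic prefix, so this construction is incomplete as stated.
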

We will need for our further purposes an implication that holds only for languages closed under reversal.

\begin{coro}[\cite{BuLuGlZa}]
\label{CuRequiv_rich}
Let $\u$ be a~rich infinite word with the language closed under reversal.
Then for any factor $w$ of $\u$, the occurrences of $w$ and $\overline{w}$ alternate.
\end{coro}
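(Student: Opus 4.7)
The plan is to argue by contradiction using condition~(3) of \Cref{equiv_rich}: every factor of $\u$ in which $w$ appears only as a prefix and $\overline{w}$ only as a suffix is a palindrome. The case $w=\overline{w}$ is immediate, so I assume throughout that $w\neq\overline{w}$. It suffices to show that between any two successive $w$-occurrences there is at least one $\overline{w}$-occurrence; the symmetric claim for $\overline{w}$ then follows by applying the result to $\overline{w}$, and together the two statements force the merged sequence of $w$- and $\overline{w}$-occurrences to alternate.

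Suppose, toward a contradiction, that there exist two consecutive $w$-occurrences at positions $\alpha<\beta$ with no $\overline{w}$-occurrence strictly between them. Closure under reversal guarantees recurrence of $\u$ and $\overline{w}\in\L$, so let $\gamma$ be the smallest $\overline{w}$-occurrence position with $\gamma>\alpha$; by hypothesis, $\gamma>\beta$. Let $\delta$ be the greatest $w$-occurrence position with $\delta<\gamma$; then $\delta\geq\beta>\alpha$. The factor
\[
y \;:=\; u_\delta\, u_{\delta+1}\cdots u_{\gamma+|w|-1}
\]
contains $w$ only as its prefix (by maximality of $\delta$) and $\overline{w}$ only as its suffix (by minimality of $\gamma$), so condition~(3) forces $y$ to be a palindrome.

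To extract the contradiction I would use the $w$-occurrence at $\alpha<\delta$ that $y$ did not see. Set $s := u_\alpha\cdots u_{\delta-1}$; by closure under reversal, the factor $\overline{sy}=y\,\overline{s}$ belongs to $\L$. A direct position count shows that $\overline{sy}$ has a single $w$-occurrence (its prefix, inherited from the prefix of $y$) but at least two $\overline{w}$-occurrences: one in the interior, at the palindromic suffix of $y$, and one at the very end of $\overline{sy}$, coming from the fact that $s$ begins with $w$. In any occurrence of $\overline{sy}$ in $\u$, this yields two consecutive $\overline{w}$-occurrences with no intervening $w$---the same kind of non-alternation we began with, but with the roles of $w$ and $\overline{w}$ swapped. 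Choosing $(\alpha,\beta)$ from the outset with $\beta-\alpha$ minimal over all non-alternating pairs (for $w$ and $\overline{w}$ combined) and then re-applying the construction to this derived $\overline{w}$-pair produces a palindrome whose placement in $\u$ is incompatible with the minimality choice, closing the argument.

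The main obstacle is the positional bookkeeping when occurrences overlap: if $w$ has a short period and $\beta-\alpha<|w|$, or if $w$ and $\overline{w}$ can overlap in $\u$, then the factor decompositions above and the ``first''/``last'' selections must be interpreted with extra care, and the iteration needs a delicate termination argument. The full combinatorial details, including the rigorous closing of the iteration, are carried out in~\cite{BuLuGlZa}, which is the reference cited in the statement.
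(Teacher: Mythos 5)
The paper gives no proof of this corollary---it is quoted verbatim from~\cite{BuLuGlZa}---so your attempt has to stand on its own. Its first two thirds do: the reduction to showing that two consecutive occurrences of $w$ (with $w\neq\overline{w}$) must be separated by an occurrence of $\overline{w}$, and the construction of the palindrome $y=u_\delta\cdots u_{\gamma+|w|-1}$ via condition (3) of Theorem~\ref{equiv_rich}, are both correct; your extremal choices of $\gamma$ and $\delta$ do exactly what you claim, overlapping occurrences included.

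The gap is in the closing step, and it is genuine. All the reflection argument produces is a pair of occurrences of $\overline{w}$ with no intervening $w$---the mirror image of the hypothesis---and the quantity you propose to shrink does not shrink: the two $\overline{w}$-occurrences you exhibit inside $\overline{sy}$ are at distance $\delta-\alpha\geq\beta-\alpha$ from one another, so taking $(\alpha,\beta)$ with $\beta-\alpha$ minimal yields no contradiction, and the iteration merely ping-pongs between $w$ and $\overline{w}$. Deferring ``the rigorous closing of the iteration'' to the cited reference means the one essential idea is missing. A short way to finish, using only Theorem~\ref{equiv_rich}(4), is this: let $v=u_\alpha\cdots u_{\beta+|w|-1}$ be the complete return word of $w$ determined by the bad pair; it contains $w$ exactly twice and no occurrence of $\overline{w}$ at all. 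If the longest palindromic suffix $p$ of $v$ had length at least $|w|$, it would end with $w$ and hence, being a palindrome, begin with $\overline{w}$, forcing $\overline{w}$ into $v$; so $|p|<|w|$, and then $p$ is also the longest palindromic suffix of $w$ itself. The same holds for the longest palindromic prefix. Thus $v$ and $w$ share both their longest palindromic prefix and their longest palindromic suffix, and condition (4) forces $v=w$, contradicting $|v|>|w|$. (Equivalently, one can invoke the unioccurrence of the longest palindromic suffix in rich words: $p$ occurs in $v$ both as its suffix and inside the prefix occurrence of $w$.)
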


A~natural question is whether infinite words reaching the equality in~\eqref{fac_pal} coincide with rich words.
The following theorem proved in~\cite{BuLuGlZa} for uniformly recurrent words,
however valid even for infinite words with the language closed under reversal, provides an answer.
\begin{thm}\label{rich_opulent}
Let $\u$ be an infinite word with the language $\L$ closed under reversal.
Then $\u$ is rich if and only if $\P(n+1)+\P(n)=\Delta \C(n)+2$ for all $n \in \mathbb N$.
\end{thm}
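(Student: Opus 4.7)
The approach is to control the nonnegative quantity $g(n) := \Delta\C(n) + 2 - \P(n) - \P(n+1)$ (nonnegative by Theorem~\ref{Balazi_fac_pal}) through a per-level identity for its increments. Combining the second-difference identity~\eqref{second_diff} with the palindromic analogue
$$\P(n+1) - \P(n-1) \;=\; \sum_{w \in {\mathcal L}_{n-1}(\u),\ w=\overline w} (\#\Pext(w) - 1),$$
which follows from the fact that each palindrome of length $n+1$ is uniquely of the form $awa$ with $w$ a palindrome of length $n-1$ and $a \in \A$, one gets after subtraction
$$g(n) - g(n-1) \;=\; \sum_{w \in {\mathcal L}_{n-1}(\u)} \b(w) \;-\; \sum_{w \in {\mathcal L}_{n-1}(\u),\ w=\overline w} (\#\Pext(w) - 1).$$
With the base case $g(0) = 0$ (direct from $\C(0) = 1$, $\C(1) = k$, $\P(0) = 1$, $\P(1) = k$), the task reduces to showing that $\u$ is rich if and only if this right-hand side vanishes for every $n \geq 1$.

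For the direction rich $\Rightarrow$ equality, I would group bispecial factors of length $n-1$ into orbits of the mirror involution $\sigma : w \mapsto \overline w$ (well-defined on $\L$ since $\L$ is closed under reversal). For each non-palindromic orbit $\{w, \overline w\}$, Corollary~\ref{CuRequiv_rich} (alternation of $w$ and $\overline w$) forces each bilateral extension $awb$ of $w$ to be matched by $\overline b w\overline a$ of $\overline w$, which paired with $\b(w) = \b(\overline w)$ yields a vanishing orbit contribution. For each palindromic orbit $\{w\}$, condition~2 of Theorem~\ref{equiv_rich} (every return word of a palindrome is a palindrome) lets me analyze the $\sigma$-action on $\Bext(w)$: the fixed edges are exactly the palindromic extensions $\Pext(w)$, and the $\sigma$-pairs of non-fixed edges $\{awb, bwa\}$ are simultaneously in or out of $\L$, giving $\b(w) + 1 = \#\Pext(w)$. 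Either way the orbit contribution to the right-hand side is zero, so the identity propagates $g(n-1) = 0$ to $g(n) = 0$.

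For the converse, assume $g(n) = 0$ for all $n$, and argue for contradiction that $\u$ is not rich. Then by Theorem~\ref{max_w+1} and Definition~\ref{rich} some factor $v$ of $\u$ contains strictly fewer than $|v|+1$ distinct palindromes. Choose $v$ of minimal length; then $v = v'a$ for some letter $a$, and the longest palindromic suffix $p$ of $v$ already occurs in $v'$. By condition~3 of Theorem~\ref{equiv_rich}, this ``missed'' palindromic completion at level $|p|$ forces the contribution at some BS palindrome $w$ to be strictly positive in the identity for $g(n) - g(n-1)$, contradicting the hypothesis.

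The main obstacle is the forward direction's palindromic-orbit analysis: establishing that in a rich word, for every BS palindrome $w$, the bilateral-extension incidence structure $\Bext(w)$ has exactly $\#\Pext(w)$ fixed edges under $\sigma$ and all non-fixed edges come in $\{awb, bwa\}$ pairs both present in $\L$. This is a delicate combinatorial consequence of the palindromic-return-word structure together with closure of $\L$ under reversal; once it is established, the entire argument — in both directions — reduces to a mechanical verification through the identity above.
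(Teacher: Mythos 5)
Your telescoping framework is correct: the identity $\Delta^2\C(n-1)=\sum_{w\in\mathcal{L}_{n-1}(\u)}\b(w)$ is \eqref{second_diff}, the palindromic analogue $\P(n+1)-\P(n-1)=\sum_{w=\overline{w}}(\#\Pext(w)-1)$ holds, and $g(0)=0$. This is in fact exactly the machinery the paper deploys --- but for Theorem~\ref{thm:minus1}, not for Theorem~\ref{rich_opulent}, which the paper does not prove at all (it is imported from \cite{BuLuGlZa}, with the remark that the proof there works for languages closed under reversal). The difficulty is that the logical dependence in the paper runs the opposite way: the per-factor identities $\b(w)=0$ (non-palindromic BS) and $\b(w)=\#\Pext(w)-1$ (palindromic BS) for rich words are \emph{deduced from} Theorem~\ref{rich_opulent} by a global counting argument, whereas the only per-factor facts provable directly from richness are the \emph{lower} bounds of Lemma~\ref{lemma:minus1} (strong connectivity of the return-word graphs) and the parity statement of Proposition~\ref{prop_more_than_1}. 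Your orbit analysis delivers exactly these and no more: that the $\sigma$-fixed elements of $\Bext(w)$ are the palindromic extensions and that the remaining extensions pair up as $\{awb,bwa\}$ is automatic from closure under reversal and yields only $\#\Bext(w)\equiv\#\Pext(w)\pmod 2$, not the claimed equality. What your forward direction actually requires is the \emph{upper} bounds $\b(w)\le 0$ and $\b(w)\le\#\Pext(w)-1$, so that $g(n)-g(n-1)\le 0$; the lower bounds give $g(n)-g(n-1)\ge 0$, which is useless since $g\ge 0$ is already known from Theorem~\ref{Balazi_fac_pal}. Those upper bounds amount to showing that the graphs of Lemma~\ref{lemma:minus1} have no superfluous edges (are trees), and no argument for this is offered. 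That is the genuine gap, and it is precisely the content you would otherwise be importing from \cite{BuLuGlZa}.

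The converse direction has the symmetric problem: from $g\equiv 0$ you only learn that the level-$(n-1)$ \emph{sums} $\sum\b(w)$ and $\sum_{w=\overline{w}}(\#\Pext(w)-1)$ agree; without per-factor sign control (which Lemma~\ref{lemma:minus1} provides only under the richness hypothesis you are trying to establish) you cannot localize a ``strictly positive contribution at some BS palindrome.'' The sentence claiming that a missed palindromic completion at level $|p|$ forces such a contribution is an assertion, not an argument, and condition~3 of Theorem~\ref{equiv_rich} cannot be invoked for a word not yet known to be rich. The actual proofs in \cite{BuLuGlZa} and \cite{BaMaPe} characterize the equality case of \eqref{fac_pal} by analyzing the reversal involution on the Rauzy graphs $\Gamma_n$ directly; that is a genuinely different mechanism from your minimal-counterexample reduction, and some such input is needed to close both halves of your argument.
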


Let us explain that Theorem~\ref{rich_opulent} is slightly stronger than the equivalence of richness
and the equality in~\eqref{fac_pal} for uniformly recurrent words, proved in~\cite{BuLuGlZa}. In other words, the following statement is a~corollary of Theorem~\ref{rich_opulent}.
\begin{corollary}
Let $\u$ be a~uniformly recurrent infinite word. Then $\u$ is rich if and only if  $\P(n+1)+\P(n)=\Delta \C(n)+2$ for all $n \in \mathbb N$.
\end{corollary}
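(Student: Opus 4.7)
The plan is to deduce the corollary directly from Theorem~\ref{rich_opulent} by verifying that, under uniform recurrence, either side of the stated equivalence forces the language $\L$ to be closed under reversal; once closure under reversal is known, Theorem~\ref{rich_opulent} applies verbatim and there is nothing more to do.

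For the forward implication I would suppose $\u$ is uniformly recurrent and rich. By Definition~\ref{rich}, the prefix of $\u$ of length $n$ contains $n+1$ distinct palindromes, so the set of palindromic factors of $\u$ is infinite. Theorem~\ref{unif_rec_pal} then yields that $\L$ is closed under reversal, and Theorem~\ref{rich_opulent} furnishes the desired equality $\P(n+1)+\P(n)=\Delta\C(n)+2$ for every $n\in\N$.

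For the converse I would assume $\u$ is uniformly recurrent and $\P(n+1)+\P(n)=\Delta\C(n)+2$ for all $n\in\N$. Since the factor complexity of any infinite word is non-decreasing, $\Delta\C(n)\geq 0$, hence $\P(n)+\P(n+1)\geq 2$ for every $n$. Consequently each pair of consecutive lengths $n,n+1$ supports at least one palindrome, so $\u$ has infinitely many distinct palindromic factors. Theorem~\ref{unif_rec_pal} again applies to give closure of $\L$ under reversal, and Theorem~\ref{rich_opulent} in the reverse direction then shows that $\u$ is rich.

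The only genuinely delicate point is ensuring, in the converse direction, that $\u$ cannot have only finitely many palindromes, since this is what would block the application of Theorem~\ref{unif_rec_pal}; however, this is immediate from the elementary lower bound $\Delta\C(n)\geq 0$, so there is no substantive obstacle and the proof is essentially a packaging argument around the two already-proved theorems.
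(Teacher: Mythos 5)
Your proof is correct and follows essentially the same route as the paper: both arguments reduce to Theorem~\ref{rich_opulent} by using Theorem~\ref{unif_rec_pal} to obtain closure under reversal, via the observation that either richness or the equality $\P(n+1)+\P(n)=\Delta\C(n)+2$ forces infinitely many palindromes. The paper merely packages this as a case split (if $\L$ is not closed under reversal, both sides of the equivalence fail), whereas you derive closure under reversal from each hypothesis separately; the mathematical content is identical.
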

\begin{proof}
If $\L$ is closed under reversal, then the statement follows from Theorem~\ref{rich_opulent}.
If $\L$ is not closed under reversal, then by Theorem~\ref{unif_rec_pal}, $\u$ contains only a~finite number of palindromes. It is then readily seen that $\u$ is neither rich, nor the equality in~\eqref{fac_pal} is attained for all $n \in \N$.
\end{proof}
Let us correct the following example given in~\cite{BuLuGlZa}. The word $\u$ generated by the substitution $a \to aba, \ b \to bb$ is recurrent, however not uniformly recurrent, and the language $\L$ is closed under reversal. By inspection of the complete return words of palindromic factors, applying Theorems~\ref{equiv_rich} and~\ref{rich_opulent}, it may be proved that the equality in~\eqref{fac_pal} is attained. The authors of~\cite{BuLuGlZa} claimed that ${\mathcal P}(2)+{\mathcal P}(3)\not = \Delta{\mathcal C}(2)+2$. This mistake is however based on the fact that ${\mathcal C}(3)=5$ and not $6$.

Let us mention as an open problem the following question.
``Does the equivalence of richness and the equality in~\eqref{fac_pal} hold for
a~larger class than words with the language closed under reversal? For instance for all recurrent words?''

The following observations may serve as hints:
\begin{itemize}
\item
It does not hold for non-recurrent infinite words in general.
The infinite word $ab^\omega$ is given in~\cite{BuLuGlZa} as an example of a~rich non-recurrent infinite word (with the language of course not closed under reversal),
which does not reach the equality in~\eqref{fac_pal} for all $n \in \N$.
\item Notice that both rich infinite words and infinite words reaching the equality in~\eqref{fac_pal} contain infinitely many palindromes.
\item If $\u$ is rich and recurrent, then $\L$ is closed under reversal (proved in~\cite{GlJuWiZa}, Proposition 2.11).
\end{itemize}

The rest of this section is devoted to the relation between richness and bilateral orders of factors.
The following proposition reveals some information on bilateral orders of palindromic bispecial factors in an infinite word with the language closed under reversal.

\begin{proposition}
\label{prop_more_than_1}
Let $\u$ be an infinite word whose language is closed under reversal.
Then the bilateral order $\b(w)$ of a palindromic bispecial factor $w \in \L$
has a~different parity than the number of palindromic extensions of $w$.
\end{proposition}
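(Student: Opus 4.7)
The plan is a short parity argument based on the involution induced by reversal on the bilateral extensions of $w$. Since $\L$ is closed under reversal and $w$ is a palindrome, I would first observe that $\Lext(w)=\Rext(w)$: indeed, for any letter $a$, $aw \in \L$ if and only if $\overline{aw}=\overline{w}\,\overline{a}=wa \in \L$. In particular $\#\Lext(w)=\#\Rext(w)$, so
\begin{equation*}
\b(w) \;=\; \#\Bext(w) - \#\Rext(w) - \#\Lext(w) + 1 \;=\; \#\Bext(w) - 2\,\#\Rext(w) + 1.
\end{equation*}

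Next I would look at the map $\sigma\colon awb\mapsto \overline{awb} = bwa$ on $\Bext(w)$. Closure under reversal ensures that $\sigma$ maps $\Bext(w)$ into itself, and palindromicity of $w$ makes $\sigma$ an involution. Its fixed points are exactly the words of the form $awa$, i.e.\ the palindromic extensions of $w$, so they contribute $\#\Pext(w)$ to the total. The non-fixed orbits all have size $2$ (pairs $\{awb,bwa\}$ with $a\neq b$), and thus contribute an even number. Consequently $\#\Bext(w)\equiv \#\Pext(w)\pmod 2$.

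Combining the two observations gives
\begin{equation*}
\b(w) \;\equiv\; \#\Bext(w) + 1 \;\equiv\; \#\Pext(w) + 1 \pmod{2},
\end{equation*}
which is exactly the claim that $\b(w)$ and $\#\Pext(w)$ have different parities. There is no real obstacle here: the whole argument is the recognition that reversal acts as an involution on $\Bext(w)$ whose fixed-point set is $\Pext(w)$, combined with the palindromic symmetry $\Lext(w)=\Rext(w)$ which kills the parity contribution of the $\#\Rext(w)+\#\Lext(w)$ term.
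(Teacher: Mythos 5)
Your argument is correct and is essentially the paper's own proof: the paper likewise notes that closure under reversal (together with $w=\overline{w}$) gives $\#\Lext(w)=\#\Rext(w)$, so $\b(w)$ and $\#\Bext(w)$ have opposite parities, and that the pairing $awb\leftrightarrow bwa$ shows $\#\Bext(w)\equiv\#\Pext(w)\pmod 2$. Your write-up merely makes the involution on $\Bext(w)$ and its fixed-point set explicit.
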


\begin{proof}
Let $w$ be a~palindromic BS factor of $\u$. 
On one hand, as the language is closed under reversal, we have $\# \Lext(w)  = \# \Rext(w)$.
Consequently, from the definition of bilateral order one can see that the parity of $\# \Bext(w)$
is different from the parity of $\b(w)$. On the other hand, the parity of the number of palindromic extensions of $w$ equals the parity of $\# \Bext(w)$ since for any $a,b \in \A$, if $awb \in \L$, then $bwa \in \L$.
\end{proof}

In the sequel, we will state and prove a~new equivalent definition of rich words by means of bilateral orders.

\begin{thm}\label{thm:minus1}
Let $\u$ be an infinite word with the language $\L$ closed under reversal. Then $\u$ is rich if and only if any bispecial factor $w$ of $\u$ satisfies:
\begin{itemize}
\item if $w$ is non-palindromic, then
$$\b(w)= 0,$$
\item if $w$ is a~palindrome, then
$$\b(w)= \# \Pext(w) - 1.$$
\end{itemize}
\end{thm}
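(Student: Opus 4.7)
The plan is to reduce the pointwise statement about $\b(w)$ to a sum equality over bispecial factors, and then to combine this with pointwise inequalities derived from richness that together force the claimed equalities.

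I begin from the identity $\Delta^2\C(n)=\sum_{w\in\L[n],\ \mathrm{BS}}\b(w)$ of equation~\eqref{second_diff}, and the observation that $\P(n+2)=\sum_{w\in\L[n]\text{ palindrome}}\#\Pext(w)$, since every palindrome of length $n+2$ has a unique palindromic core of length $n$. A palindrome $w$ that is not BS satisfies $\#\Lext(w)=\#\Rext(w)=1$ by closure under reversal, and its common extension letter $a$ fulfils $awa\in\L$ (because $\Lext(wa)\subseteq\Lext(w)=\{a\}$), so $\#\Pext(w)=1$ and such terms contribute zero. Hence
$$\P(n+2)-\P(n)\;=\;\sum_{w\in\L[n],\ \text{pal BS}}(\#\Pext(w)-1).$$
By Theorem~\ref{rich_opulent} and the easy base case $\P(1)+\P(0)=k+1=\Delta\C(0)+2$, a telescoping argument shows that $\u$ is rich if and only if
\begin{equation}\label{eq:sumeq}
\sum_{w\in\L[n],\ \mathrm{BS}}\b(w)\;=\;\sum_{w\in\L[n],\ \text{pal BS}}(\#\Pext(w)-1)\qquad\text{for all }n\in\N.
\end{equation}

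The new ingredient is the following key lemma: for a rich $\u$ with $\L$ closed under reversal, every BS factor $w$ satisfies $\b(w)\geq\#\Pext(w)-1$ when $w$ is a palindrome, and $\b(w)\geq 0$ when $w$ is non-palindromic. For the palindromic case, I enumerate the occurrences $p_1<p_2<\cdots$ of $w$ in $\u$ and set $a_i=u_{p_i-1}$, $b_i=u_{p_i+|w|}$, both in $\Lext(w)=\Rext(w)=:L$ of cardinality $k$. Applying Theorem~\ref{equiv_rich}(3) to $u_{p_i}\cdots u_{p_{i+1}+|w|-1}$, which by consecutiveness contains $w=\overline{w}$ only as prefix and only as suffix, shows that this factor is a palindrome, and comparing letters at mirror positions yields $b_i=a_{i+1}$; in the overlap regime $p_{i+1}-p_i<|w|$, the overlap forces $w$ to have period $d=p_{i+1}-p_i$, and the identity $w_{d-1}=w_{|w|-d}$ for the palindrome $w$ gives the same conclusion directly. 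Hence $\Bext(w)=\{a_iwa_{i+1}:i\geq 1\}$, and since each letter of $L$ appears as some $a_i$, the walk $a_1a_2\cdots$ visits every vertex of $L$. Stripping loop steps produces a walk on the subgraph $H^{\ast}$ of non-loop edges $\{a,b\}$ (with $awb\in\L$, $a\neq b$) still visiting every vertex, so $H^{\ast}$ has at least $k-1$ edges. Writing $\#\Bext(w)=\#\Pext(w)+2m$ with $m\geq k-1$, we conclude $\b(w)=\#\Pext(w)+2m-2k+1\geq\#\Pext(w)-1$.

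In the non-palindromic case, Corollary~\ref{CuRequiv_rich} ensures that the occurrences of $w$ and $\overline{w}$ alternate. I apply Theorem~\ref{equiv_rich}(3) to each factor between two consecutive such occurrences to obtain a palindrome, and the same mirror-position argument gives the linkage $b_i=a_{i+1}$ between bracketing letters (the overlap case again reduces to an automatic letter identity coming from the overlap structure). This produces a walk in the bipartite extension graph $E(w)$ with vertex set $\Lext(w)\sqcup\Rext(w)$ and edge set $\Bext(w)$ (the edges contributed by $\overline{w}$-occurrences are reinterpreted as edges of $E(w)$ via $a\overline{w}b\in\L\Leftrightarrow bwa\in\L$) that visits every vertex. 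Hence $E(w)$ is connected, giving $\#\Bext(w)\geq\#\Lext(w)+\#\Rext(w)-1$, i.e.\ $\b(w)\geq 0$.

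With the key lemma in hand, the forward direction of the theorem follows: richness gives \eqref{eq:sumeq}, and summing the pointwise inequalities forces each to be an equality, yielding the stated formulas for $\b(w)$. The backward direction is immediate: summing the assumed pointwise equalities produces \eqref{eq:sumeq}, whence richness by Theorem~\ref{rich_opulent}. The main obstacle is the key lemma, and within it the overlap regime, where no palindromic middle exists and the linkage $b_i=a_{i+1}$ must be extracted directly from the forced period of $w$ rather than from Theorem~\ref{equiv_rich}(3).
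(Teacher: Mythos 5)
Your proposal is correct and follows essentially the same route as the paper: the identical reduction of richness to the sum identity $\sum_{w\ \mathrm{BS}}\b(w)=\sum_{w=\overline{w}\ \mathrm{BS}}(\#\Pext(w)-1)$ via Theorem~\ref{rich_opulent}, and the identical key lemma (the paper's Lemma~\ref{lemma:minus1}) established by the same mechanism, namely that richness makes the factors bridging consecutive occurrences of $w$ and $\overline{w}$ (resp.\ the complete return words of a palindromic $w$) palindromes, which links successive extension letters and forces a connected spanning subgraph of the extension graph, yielding the pointwise lower bounds on $\b(w)$. Your walk-along-occurrences phrasing is merely a reformulation of the paper's strongly connected auxiliary graphs, and your separate treatment of the overlap regime is superfluous, since the mirror-position argument applies there verbatim.
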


The following lemma will provide the most important tool for the proof of Theorem~\ref{thm:minus1}.

\begin{lem}\label{lemma:minus1}
Let $\u$ be a~rich infinite word whose language is closed under reversal.
Then it holds for any bispecial factor $w$:
\begin{itemize}
\item if $w$ is non-palindromic, then
$$\b(w)\geq 0,$$
\item if $w$ is a~palindrome, then
$$\b(w)\geq \# \Pext(w) - 1.$$
\end{itemize}
\end{lem}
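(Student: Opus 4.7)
The plan is to read $\#\Bext(w)$ off a graph on the one-sided extensions of $w$ and prove the required inequality as a connectivity statement, exploiting \Cref{CuRequiv_rich} (alternation of $w$ and $\overline{w}$) together with \Cref{equiv_rich}~(3) and the complete-return-word content of \Cref{equiv_rich}~(2). In each case the task will reduce to showing that a certain natural graph on the extensions of $w$ is connected.

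For a non-palindromic bispecial $w$, I form the bipartite graph $G_w$ with parts $\Lext(w)$ and $\Rext(w)$ and edge set $\Bext(w)$. Since every vertex has positive degree, $\b(w)=\#\Bext(w)-\#\Lext(w)-\#\Rext(w)+1\geq 0$ is equivalent to $G_w$ being connected. I argue connectedness by contradiction: assume $G_w$ splits into distinct components and use recurrence to produce consecutive occurrences of $w$ at positions $p<p'$ whose extension pairs lie in different components. \Cref{CuRequiv_rich} supplies a unique occurrence of $\overline{w}$ at some $q$ with $p<q<p'$, and \Cref{equiv_rich}~(3) applies to both $F_1=\u[p..q+|w|-1]$ and $F_2=\u[q..p'+|w|-1]$ (in the first $w$ is only the prefix and $\overline{w}$ only the suffix; in the second the roles are swapped), so both are palindromes. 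Matching mirrored positions in $F_1$ gives $\u[q-1]=\u[p+|w|]$, equal to the letter $b$ following $w_p$; matching in $F_2$ gives $\u[q+|w|]=\u[p'-1]$, equal to the letter $a'$ preceding $w_{p'}$. Hence $b\overline{w}a'\in\L$, and reversal closure gives $a'wb\in\L$, i.e., an edge of $G_w$ joining the two components --- a contradiction.

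For a palindromic bispecial $w$, reversal closure forces $\Lext(w)=\Rext(w)$; call this set $V$ and its size $E$. A bi-extension $awb$ with $a\neq b$ comes paired with its mirror $bwa$, so $\#\Bext(w)=\#\Pext(w)+2m$, where $m$ counts unordered pairs $\{a,b\}$ with $a\neq b$ and $awb\in\L$. The inequality $\b(w)\geq \#\Pext(w)-1$ then becomes $m\geq E-1$. I will show that the non-loop edges of the graph $H$ on $V$ defined by $\{a,b\}\in E(H)\Leftrightarrow awb\in\L$ already form a connected graph on $V$, using the following consequence of \Cref{equiv_rich}~(2): for consecutive occurrences $p_i<p_{i+1}$ of $w$, the complete return word $\u[p_i..p_{i+1}+|w|-1]$ is a palindrome, so pairing mirrored positions gives $\u[p_i+|w|]=\u[p_{i+1}-1]$. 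Writing $a_i=\u[p_i-1]$ and $b_i=\u[p_i+|w|]$, this says $b_i=a_{i+1}$, so the sequence $(a_i)_{i\geq 1}$ traces a walk in $H$ that visits every vertex of $V$ (by recurrence) and whose transitions between distinct vertices use non-loop edges; hence the non-loop part of $H$ is connected on $V$, delivering $m\geq E-1$.

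The main obstacle I anticipate is the overlap bookkeeping: when the relevant pair of occurrences sits closer than $|w|$ apart, the mirrored positions identified above lie inside $w$ or $\overline{w}$ themselves rather than in a gap between them, so the palindromicity equation becomes tautological. Pushing the argument through then requires invoking the palindromicity of $w$ directly --- for instance, in the palindromic case with $d=p_{i+1}-p_i<|w|$ the two candidate letters resolve to $w[|w|-d]$ and $w[d-1]$, which coincide because $w$ is a palindrome; an analogous check handles the non-palindromic overlap. Once these internal identifications are verified in every regime, the two connectivity arguments above deliver the two inequalities stated in the lemma.
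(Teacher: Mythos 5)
Your proof is correct and follows essentially the same route as the paper's: in both cases the inequality is reduced to connectivity of a graph built on the extensions of $w$, established via the palindromicity of the factors between consecutive occurrences of $w$ and $\overline{w}$ (i.e., via \Cref{equiv_rich} and \Cref{CuRequiv_rich}). The only cosmetic differences are that your graphs are undirected and live on letters rather than on the words $wa$, and the overlap issue you flag turns out to be a non-issue since the mirrored-index identities hold uniformly.
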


\begin{proof}

Let $w$ be a non-palindromic BS factor.
By the definition of $\b(w)$, we want to prove
$$
\# \Bext(w) \geq \# \Rext(w) + \# \Lext(w) - 1.
$$
We will construct a bipartite oriented graph $G$ having its set of vertices $V$ defined as
$$
V = \left \{ wa | a \in \Rext(w) \right \} \cup \left \{ \overline{w}a | a \in \Rext(\overline{w}) \right \}.
$$
There is an oriented edge from $wa$ to $\overline{w}b$ if there exists a factor $vb \in \L$ such that
 $wa$ is its prefix, $\overline{w}b$ is its suffix and factors $w$ and $\overline{w}$ occur each exactly once in $vb$.
 Furthermore, there is an oriented edge for $\overline{w}x$ to $wy$ if there exists a factor $vy \in \L$ such that
 $\overline{w}x$ is its prefix, $wy$ is its suffix and factors $w$ and $\overline{w}$ occur each exactly once in $v$.

\begin{figure}[ht]
\begin{center}
\begin{picture}(250,150)

\put(50,20){\circle*{5}}
\put(200,20){\circle*{5}}
\put(60,20){\vector(1,0){130}}

\put(48,8){$\overline{w}a$}
\put(198,8){$wb$}

\put(48,57){\rule{159pt}{0.5pt}}
\put(33,50){$\ldots$}
\put(48,50){$\! \mid \underbrace{\overline{w} \mid a \hspace{110pt} w}_{\displaystyle v} \mid b \mid$}
\put(211,50){$\ldots$}
\put(48,47){\rule{159pt}{0.5pt}}

\put(50,100){\circle*{5}}
\put(200,100){\circle*{5}}
\put(60,100){\vector(1,0){130}}

\put(48,88){$wa$}
\put(198,88){$\overline{w}b$}

\put(48,137){\rule{159pt}{0.5pt}}
\put(33,130){$\ldots$}
\put(48,130){$\! \mid \underbrace{w \mid a \hspace{110pt} \overline{w}}_{\displaystyle v} \mid b \mid$}
\put(211,130){$\ldots$}
\put(48,127){\rule{159pt}{0.5pt}}

\end{picture}
\end{center}
\caption{Incidence relation in the graph $G$.}
\label{inc_G}
\end{figure}
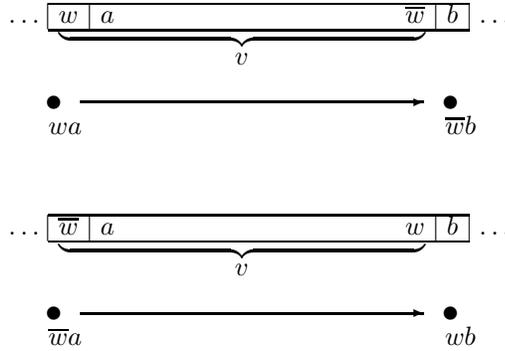
Due to Theorem~\ref{equiv_rich}, such a factor $v$ is a~palindrome.
 Therefore the existence of an edge from $wa$ to $\overline{w}b$ implies $a\overline{w}b \in \L$, and so $bwa \in \L$, too.
 Analogously, if there is an edge from $\overline{w}x$ to $wy$, we have $xwy \in \L$.

 By \Cref{CuRequiv_rich}, the occurrences of $w$ and $\overline{w}$ alternate. Thus, to any factor of $\u$ corresponds a~path in $G$.
 As $\u$ is recurrent, the graph $G$ is strongly connected.

 As a~consequence, the number of pairs of its vertices which are connected by an edge is greater than or equal to the number of its vertices minus~$1$.
 We have
 $$
 \# \Bext(w) \geq \# \Rext(w) + \# \Rext(\overline{w}) - 1.
 $$
Since $\Rext(\overline{w}) = \Lext(w)$ the proof of the first part is finished.

\vspace{\baselineskip}

Let $w$ be a~palindromic BS factor.
Let us consider this time a~graph $G$ whose set of factors $V$ is defined as
$$
V = \left \{ wa | a \in \Rext(w) \right \}.
$$
There is an edge from $wa$ to $wb$ if there exists a factor $vb \in \L$ such that $v$ is a complete return word to $w$ that has $wa$ as a~prefix.
As $\u$ is rich, $v$ is a~palindrome.
Due to the recurrence of $\u$, for every $awb \in \L$, $a \neq b$, there exists an edge in $G$ going from $wa$ to $wb$.
As the language is closed under reversal, the edge going from $wb$ to $wa$ is in $G$, too.
Therefore
$$
\# \left \{ awb \in \L | a \neq b \right \} = 2 \times \text{ the number of pairs of distinct vertices connected by an edge}.
$$
Owing to the recurrence of $\u$, the graph $G$ is strongly connected, thus the number of pairs of distinct vertices connected by an edge is greater than or equal to the number of vertices of $G$ minus $1$,
which equals $\# \Rext(w) - 1$.
We find
$$
\# \Bext(w) = \# \left \{ awb \in \L | a \neq b \right \} + \# \Pext(w) \geq 2 \left( \# \Rext(w) - 1 \right) + \#\Pext(w).
$$
As $\Rext(w) = \Lext(w)$, the statement is proved.

\end{proof}
\begin{proof}[Proof of Theorem~\ref{thm:minus1}]
\noindent $(\Leftarrow)$: Let us show by mathematical induction that
$$\Delta \C(n)+2=\P(n+1)+\P(n) \quad \text{for all $n \in \mathbb N$}.$$
Since $\L$ is closed under reversal, this means by Theorem~\ref{rich_opulent} that $\u$ is rich.

The assumption on bilateral orders and the fact that non-bispecial palindromic factors have a~unique palindromic extension guarantee the following equality for all $n \in \mathbb N$:
\begin{equation}\label{sec_diff_pal}
\Delta^2 \C(n)=\sum_{w \in {\mathcal L}_n(\u)}\b(w)=\sum_{\substack{w \in {\mathcal L}_n(\u)\\ w=\overline{w}}}\left(\#\Pext(w)-1\right)=\P(n+2)-\P(n).
\end{equation}
\noindent For $n=0$, we can write $\Delta \C(0) + 2 = \C(1) - \C(0) + 2 = \# \A + 1$.
On the other hand we have $\P(1) + \P(0) = \# \A + 1$. \\
\noindent Take $N \in \mathbb N$. Assume $\Delta \C(n)+2=\P(n+1)+\P(n)$ holds for all $n <N$.
Using the induction assumption and \eqref{sec_diff_pal}, we obtain
$$\begin{array}{rcl}
\Delta \C(N)+2&=&\left(\Delta \C(N)-\Delta\C(N-1)\right)+\left(\Delta \C(N-1)+2\right)\\
&=&\Delta^2\C(N-1)+\left(\P(N-1)+\P(N)\right)\\
&=&\left(\P(N+1)-\P(N-1)\right)+\left(\P(N-1)+\P(N)\right)\\
&=&\P(N+1)+\P(N).
\end{array}$$
\noindent $(\Rightarrow)$: Take $n \in \mathbb N$ arbitrary. We will prove the statement of the theorem for all BS factors of length $n$.

As $\u$ is rich and the language $\L$ is closed under reversal, we have by Theorem~\ref{rich_opulent}
$$\Delta \C(k)+2=\P(k+1)+\P(k) \quad \text{for all $k \in \mathbb N$}.$$
Applying this equality, we will deduce the form of $\Delta^2\C(n)$.
$$\Delta^2\C(n)=\left(\Delta\C(n+1)+2\right)-\left(\Delta\C(n)+2\right)=\left(\P(n+2)+\P(n+1)\right)-\left(\P(n+1)+\P(n)\right)=\P(n+2)-\P(n).$$
Consequently, we obtain
$$\sum_{w \in {\mathcal L}_n(\u)}\b(w)=\Delta^2\C(n)=\P(n+2)-\P(n)=\sum_{\substack{w \in {\mathcal L}_n(\u)\\ w=\overline{w}}}\left(\#\Pext(w)-1\right).$$
Palindromic factors that are not BS have obviously exactly one palindromic extension. Thus, we can rewrite the previous equality
\begin{equation}\label{onlyBS}
\sum_{w \in {\mathcal L}_n(\u)}\b(w)=\sum_{\substack{w \in {\mathcal L}_n(\u)\\ w=\overline{w}, w \ \text{BS}}}\left(\#\Pext(w)-1\right).
\end{equation}
Let us split the sum of bilateral orders into two parts and use Lemma~\ref{lemma:minus1}

\begin{equation}\label{BSsplit}
\sum_{w \in {\mathcal L}_n(\u)}\b(w)=\sum_{\substack{w \in {\mathcal L}_n(\u)\\ w\not=\overline{w}, \ w \ \text{BS}}}\b(w)+\sum_{\substack{w \in {\mathcal L}_n(\u)\\ w=\overline{w}, \ w \ \text{BS}}}\b(w)
\geq  \sum_{\substack{w \in {\mathcal L}_n(\u)\\ w\not=\overline{w},\ w \ \text{BS}}}\b(w)+\sum_{\substack{w \in {\mathcal L}_n(\u)\\ w=\overline{w},\ w \ \text{BS}}}\left(\#\Pext(w)-1\right).
\end{equation}
This in combination with \eqref{onlyBS} gives $\displaystyle \sum_{\substack{w \in {\mathcal L}_n(\u)\\ w\not=\overline{w}, \ w \ \text{BS}}}\b(w)=0$. By Lemma~\ref{lemma:minus1}, bilateral orders of such factors are non-negative, which implies $\b(w)=0$ for all non-palindromic BS factors.
Since the equality is reached in~\eqref{BSsplit}, we obtain $\displaystyle \sum_{\substack{w \in {\mathcal L}_n(\u)\\ w=\overline{w},\ w \ \text{BS}}}\b(w)=
\sum_{\substack{w \in {\mathcal L}_n(\u)\\ w=\overline{w}, \ w \ \text{BS}}}\left(\#\Pext(w)-1\right).$
Together with Lemma~\ref{lemma:minus1}, this results in $\b(w)=\#\Pext(w)-1$ for all palindromic BS factors.
\end{proof}
\section{Equivalent definitions of Sturmian words} \label{sec:equiv}
Let us stress a~close link between periodicity and complexity (revealed by Hedlund and Morse~\cite{HeMo1}).
On one hand, the complexity of eventually periodic words is bounded. On the other hand, if there exists $n \in
\mathbb N$ such that ${\mathcal C}(n) \leq n$, then the complexity is bounded and the infinite
word $\u$ is eventually periodic. In consequence, the complexity of aperiodic words
satisfies ${\mathcal C}(n) \geq n+1$ for all $n \in \mathbb
N$. {\em Sturmian words} are defined as infinite words with the complexity ${\mathcal
C}(n)=n+1$ for all $n \in \mathbb N$.
This condition on complexity implies many properties. Let us list some of them.
If $\u$ is a~Sturmian word, then $\u$ has the following properties:
\begin{itemize}
\item $\u$ is a~binary word,
\item $\u$ is aperiodic,
\item the language $\L$ is closed under reversal,
\item the language $\L$ contains infinitely many palindromes,
\item the word $\u$ is uniformly recurrent,
\item the language $\L$ contains no weak bispecial factors,
\item $\u$ is rich.
\end{itemize}

There exist many equivalent definitions of Sturmian words. The following
theorem summarizes several of their well-known combinatorial characterizations.
\begin{thm}
\label{eq_sturm}
Let $\u$ be an infinite word over the alphabet $\A$.
The properties listed below are equivalent:
\begin{enumerate}[(i)]
\item $\u$ is Sturmian, i.e., $\FC{n} = n + 1$ for all $n$,
\item $\u$ is binary and contains a~unique left special factor of every length,
\item $\u$ is binary, aperiodic and every bispecial factor is ordinary,
\item any factor of $\u$ has exactly two return words,
\item $\u$ contains one palindrome of every even length and two
palindromes of every odd length,
\item $\u$ is binary and every palindrome has a~unique palindromic extension,
\item $\u$ is aperiodic and balanced,
\item $\u$ is aperiodic and ${\mathcal AC}(n)=2$ for all $n \in \mathbb N, \ n \geq 1$.
\end{enumerate}
\end{thm}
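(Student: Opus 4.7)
The plan is to organize the eight conditions into three natural clusters and prove the equivalences by linking each cluster back to (i). The clusters are the complexity cluster (i)--(iii), the return-word/palindrome cluster (iv)--(vi), and the balance cluster (vii)--(viii). Several implications are classical, and I would invoke Vuillon's return-word theorem \cite{Vu}, the Droubay--Pirillo palindromic characterization \cite{DrPi}, the Hedlund--Morse analysis \cite{HeMo1} and the abelian-complexity results of \cite{RiSaZa, RiSaZa2}, together with Theorem~\ref{rich_opulent} and the second-difference identity \eqref{second_diff} already available in this excerpt.

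For the complexity cluster, condition (i) forces $\mathcal{C}(1)=2$, hence binarity, and the identity $\Delta\mathcal{C}(n)=\sum_{w\in\mathcal{L}_n(\u)}(\#\Lext(w)-1)=1$ gives exactly one LS factor of each length with two left extensions, i.e.\ (ii); the converse reverses this computation. For (ii) $\Rightarrow$ (iii), uniqueness of LS forces uniqueness of BS of each length, and on a binary alphabet such a BS factor $w$ satisfies $\b(w)\in\{-1,0,1\}$, so \eqref{second_diff} combined with $\Delta^2\mathcal{C}(n)=0$ pins $\b(w)=0$. For (iii) $\Rightarrow$ (i), \eqref{second_diff} makes $\Delta\mathcal{C}$ constant, aperiodicity forces it to be at least $1$, and binarity fixes $\Delta\mathcal{C}(0)=1$, giving $\mathcal{C}(n)=n+1$.

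For the return-word/palindrome cluster, (i) $\Leftrightarrow$ (iv) is Vuillon's theorem \cite{Vu}. For (i) $\Rightarrow$ (v), every Sturmian word is rich (as already listed in this section), so by Theorem~\ref{rich_opulent} $\mathcal{P}(n)+\mathcal{P}(n+1)=\Delta\mathcal{C}(n)+2=3$; with $\mathcal{P}(0)=1$ this forces $\mathcal{P}(2k)=1$ and $\mathcal{P}(2k+1)=2$. The converse (v) $\Rightarrow$ (i) is the Droubay--Pirillo characterization \cite{DrPi}. For (v) $\Leftrightarrow$ (vi) on a binary alphabet, unique palindromic extension of every palindrome is equivalent to $\mathcal{P}(n+2)=\mathcal{P}(n)$ for every $n$, which together with the base case $\mathcal{P}(0)=1$, $\mathcal{P}(1)=2$ is exactly (v).

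Finally for the balance cluster, (i) $\Rightarrow$ (vii) is the classical fact that Sturmian words are $1$-balanced. On a binary alphabet the Parikh vectors of factors of length $n$ lie on the line $x+y=n$ and form a contiguous set of integer points, so $\mathcal{AC}(n)=c+1$ where $c$ is the smallest balance constant; aperiodicity excludes the degenerate case, giving (vii) $\Leftrightarrow$ (viii). The direction (viii) $\Rightarrow$ (i) is due to \cite{RiSaZa, RiSaZa2}, and I expect this to be the main obstacle of the whole proof, because recovering factor complexity $n+1$ from the purely abelian datum $\mathcal{AC}(n)=2$ requires ruling out aperiodic binary words of strictly larger complexity whose factors still realize only two Parikh vectors per length.
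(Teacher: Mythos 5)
The paper offers no proof of Theorem~\ref{eq_sturm}: it is a survey statement whose individual equivalences are simply attributed to Vuillon~\cite{Vu}, Coven--Hedlund~\cite{CoHe}, Hedlund--Morse~\cite{HeMo}, and Droubay--Pirillo~\cite{DrPi}. Your proposal takes essentially the same route, citing these classical results for the nontrivial directions while additionally sketching the elementary links within the complexity and balance clusters; those sketches are sound (the one quibble is that your purely counting-based argument for (v)~$\Leftrightarrow$~(vi) does not by itself exclude a palindrome with two extensions compensating one with none, but that equivalence is covered by the Droubay--Pirillo citation in any case).
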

The characterization by return words is due to Vuillon~\cite{Vu} and the one by the abelian complexity is a~consequence of the works by Coven and Hedlund~\cite{CoHe}.
The equivalent definition based on the balance property comes already from Hedlund and Morse~\cite{HeMo}.
The two equivalent properties concerning palindromes have been proved by Droubay and Pirillo~\cite{DrPi}.
Notice that the sixth property can be equivalently rewritten as
$${\mathcal P}(n)+{\mathcal P}(n+1)=3 \quad \text{for all $n \in \mathbb N$,}$$
and also as
$$\P(n+2)=\P(n) \quad \text{for all $n \in \N$}.$$
Let us recall that $\P(0)=1$ since the empty word is considered to be a~palindrome.
\section{Generalizations of Sturmian words} \label{generalizedSturm}
We have seen that Sturmian words can be defined in many equivalent ways.
As a~matter of course,
various generalizations to multiliteral alphabets have been
suggested and studied.
\subsection{Two well-known generalizations}\label{WellKnown}
The most studied generalizations are Arnoux-Rauzy words and words coding $k$-interval exchange
transformation.

{\em Arnoux-Rauzy words} (or {\em AR words} for simplicity) are
infinite words with the language closed under reversal and containing exactly one LS
factor $w$ of every length,
and such that every LS factor has the same number $k$ of left extensions, i.e., $\# \Lext(w)=k$.
Their alphabet $\A$ has $k$ letters since the empty word has exactly $k$ left extensions.
AR words are aperiodic and satisfy $\C(n)=(k-1)n+1$ for all $n \in \N$.
They have been defined and studied in~\cite{DrJuPi}, the following properties have been proved ibidem.
The language of AR words contains infinitely many palindromes,
they are uniformly recurrent, rich, and have only ordinary BS factors.
AR words form a~subclass of extensively studied {\em episturmian
words} (see for instance \cite{GlJu}), defined as infinite words that have the language closed
under reversal and contain at most one LS factor of every length.

\vspace{0.5cm}
Another well-known generalization of Sturmian words
is provided by {\em words coding $k$-interval exchange transformation}.
Let us state their definition and then explain why such words generalize
Sturmian words to $k$-letter alphabets.
Take positive numbers $\alpha_1, \dots,
\alpha_k$ such that
$\sum_{i=1}^k \alpha_i=1$. They define a~partition of the interval
$I = [0, 1)$ into $k$ subintervals $$I_j=
\bigl[\sum_{i=1}^{j-1}\alpha_i, \sum_{i=1}^j \alpha_i\bigr), \  j =
1, 2,\dots, k.$$
The {\em interval exchange transformation} is a~bijection $T: I \to I$
given by the prescription
$$T(x)=x+c_j \quad \text{for all $x \in I_j, \ j \in \{1,2,\dots,k\}$},$$
where $c_j$ are suitably chosen constants.
Since $T$ is a~bijection, the intervals $T(I_1), T(I_2), \dots, T(I_k)$ form a~partition of $I$.
The orders of $T(I_j)$ in the partition define a~permutation
$\pi:\{1,2,\dots,k\}\to \{1,2,\dots,k\}$ and this permutation $\pi$ determines uniquely the constants $c_j$.
For instance, if the permutation $\pi$ is symmetric, i.e., $\pi=\left(\begin{smallmatrix}1&2&\dots &k-1&k\\ k&k-1&\dots&2&1\end{smallmatrix}\right)$,
then the transformation $T$ is of the following form
$$T(x) = x
+\sum_{i>j}\alpha_i - \sum_{i<j}\alpha_i \quad \mbox{for}\quad x \in
I_j.$$

The infinite word $\u=u_0u_1u_2\dots$ over ${\A}=\{a_1, \dots, a_k\}$ associated with $T$ is defined as
$$u_n:=a_j \quad \mbox{if $\quad T^{n}(x) \in I_j$}$$ and is called a~{\em word coding
$k$-interval exchange transformation} ({\em $k$-iet
word} for short).

From the point of view of combinatorics on words, an important role is played by those transformations whose orbit
for an arbitrary $x \in I$ is dense in $I$, i.e., the closure of $\{T^n(x)\bigm | n \in \N\}$ is the whole interval~$I$.
A~sufficient condition for this property represents the so-called i.d.o.c. (consult~\cite{Ke}) and the irreducibility of the permutation $\pi$.
In the sequel, let us assume that $T$ satisfies both of these properties.
The $k$-iet word is then uniformly recurrent, its language does not depend on the position of
the starting point $x$, but only on the transformation~$T$, its complexity satisfies $\C(n)=(k-1)n+1$ for all $n \in \N$ and no BS factor is weak.

The language of the $k$-iet word $\u$ is closed under reversal if and only if the permutation $\pi$ is symmetric.
In such a~case, the language $\L$ contains infinitely many palindromes and, as shown in~\cite{BaMaPe},
the equality in~\eqref{fac_pal} is attained. Hence, according to Theorem~\ref{rich_opulent},
the $k$-iet words are rich. It is easy to describe the infinite palindromic branches for such $k$-iet words.
The one with the empty word as its center is obtained as the coding of the orbit $\{T^n(x)| n \in \mathbb Z\}$ with the starting point $x=1/2$ and
the branch with the center $a_j \in \A$ as the coding of the orbit with the starting point $x=\sum_{i < j}\alpha_i+\alpha_j/2$.

The $k$-iet words provide a~generalization of Sturmian words due to the well-known connection between Sturmian and mechanical words~\cite{Lo2}.

\begin{thm}Let $\u$ be an infinite word.
Then $\u$ is Sturmian if and only if $\u$ is a~2-iet word with an irrational partition of the unit interval.
\end{thm}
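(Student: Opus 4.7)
My plan is to reduce the statement to the classical Morse–Hedlund description of Sturmian words as \emph{mechanical words} with irrational slope, which is alluded to in the paragraph preceding the theorem. The bridge is the observation that, at $k=2$, a 2-iet with irrational partition is literally an irrational rotation of the unit circle, and its coding is then exactly a mechanical word.

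First I would unwind the $k=2$ definitions. On a two-letter alphabet the only irreducible permutation is the symmetric one, so $c_1 = \alpha_2$ and $c_2 = -\alpha_1$ are forced. Hence $T(x) = x + \alpha_2$ on $I_1 = [0,\alpha_1)$ and $T(x) = x - \alpha_1$ on $I_2 = [\alpha_1,1)$; after identifying $[0,1)$ with $\R/\Z$, the map $T$ is precisely the rotation $R(x) = x + \alpha_2 \pmod 1$. The i.d.o.c.\ assumption of Section~\ref{WellKnown} specialises at $k=2$ to the minimality of this rotation, which is equivalent to the irrationality of $\alpha_1$ (equivalently of $\alpha_2 = 1 - \alpha_1$). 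Coding the orbit $\{T^n(x)\}_{n\geq 0}$ by $u_n = a_j$ whenever $T^n(x) \in I_j$ therefore coincides, up to the standard convention on the boundary of the partition, with the definition of a mechanical word of irrational slope $\alpha_1$ and intercept $x$.

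With this dictionary, both implications follow at once from the classical Morse–Hedlund theorem~\cite{Lo2}: an infinite word is Sturmian if and only if it is mechanical with irrational slope. For $(\Leftarrow)$ one may alternatively bypass Morse–Hedlund and quote the general $k$-iet remarks of Section~\ref{WellKnown}, which give $\mathcal{C}(n) = (k-1)n + 1 = n + 1$ together with aperiodicity for any 2-iet word with irrational partition; this forces the word to be Sturmian by definition.

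The only obstacle is really cosmetic: matching the half-open intervals $I_1, I_2$ with the standard lower/upper mechanical-word convention requires some care at the partition boundary (the choice of $x$ and the orientation of the inequality in $u_n = a_j$), but this is routine and no combinatorial content is involved beyond the Morse–Hedlund classification itself.
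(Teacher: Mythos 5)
Your argument is correct and follows exactly the route the paper intends: the paper states this theorem without proof, attributing it to the well-known correspondence between Sturmian words and mechanical words of irrational slope from~\cite{Lo2}, and your identification of a 2-iet with symmetric permutation as an irrational circle rotation (with i.d.o.c.\ reducing to irrationality of $\alpha_1$) is precisely the standard dictionary that makes that citation apply.
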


Recently, in \cite{SmUl}, a different generalization of Sturmian sequences is considered.
It in fact corresponds to~a special subclass of $k$-iet words given by coding a trajectory in a regular $2n$-gon.


\subsection{Combinatorial generalizations}
Let us write down and baptize the generalizations of properties from Theorem~\ref{eq_sturm}.
We will then refer to them and study their relations.
Let $\u$ be an infinite word over the alphabet ${\A}$. Denote $k=\#{\A}$.
\begin{enumerate}
\item{Property $\mathcal C$}:

the factor complexity of $\u$ satisfies
${\mathcal C}(n)=(k-1)n+1$ for all $n \in \mathbb N$.
\item{Property $\LR$}:

$\u$ contains one left special and one right special factor of every length.
\item {Property $\BO$}:

all bispecial factors of $\u$ are ordinary.

\item{Property $\Ret$}:

any factor of $\u$ has exactly $k$ return words.
\item{Property $\P$}:

the palindromic complexity of $\u$ satisfies
${\P}(n)+{\P}(n+1)=k+1$ for all $n \in \mathbb N$.
\item{Property $\PE$}:

every palindrome has a~unique palindromic extension in $\u$.
\item{Balance properties}:
\begin{enumerate}
\item{Property $\B_{\forall}$}:

$\u$ is aperiodic and for all $a \in {\A}$ and for all factors $w,v \in \L$ with $|w|=|v|$ it holds
$$||w|_a-|v|_a|\leq k-1.$$
\item{Property $\B_{\exists}$}:

$\u$ is aperiodic and there exists $a \in {\A}$ such that for all factors $w,v \in \L$ with $|w|=|v|$ it holds
$$||w|_a-|v|_a|\leq k-1.$$
\item{Property $\AC$}:

$\u$ is aperiodic and the abelian complexity of $\u$ satisfies $\AC(n)=k$ for all $n \in \N, \ n \geq 1$.
\end{enumerate}
\end{enumerate}

At first, let us mention which properties are satisfied by the two generalizations of Sturmian words from Section~\ref{WellKnown}. AR words fulfill Properties: $\C, \LR, \BO, \Ret, \P, \PE$ and $k$-iet words satisfy Properties:
$\C, \BO, \Ret$. If moreover the permutation defining the $k$-iet word is symmetric, then these words have Properties
$\P$ and $\PE$. Property $\LR$ does not hold for $k$-iet words.

It follows directly from the definition that some Properties imply others. For instance, by~\eqref{second_diff} $\BO$ implies $\C$. They are not equivalent as shown by the following example taken from~\cite{Fer}.
\begin{example}[$\C \not \Rightarrow \BO$] \label{ex:recoded_chacon}
The infinite ternary word $\lim_{n \to \infty}\varphi^n(a)$, where
$\varphi(a)=ab, \ \varphi(b)=cab, \ \varphi(c)=ccab$ -- a~recoding of the Chacon substitution -- has the complexity $2n+1$ for every $n \in \N$,
but contains infinitely many strong and weak BS factors.
\end{example}
In the sequel, we will show that no two of these properties are equivalent on a~multiliteral alphabet.

Concerning Properties $\B_{\forall}, \B_{\exists}$ and $\AC$, we will not treat them but in the last section since they are very restrictive, and consequently, satisfied only by a~small class of infinite words.

\subsection{Property $\LR$}

Property $\LR$ does not characterize AR words since it is satisfied by a~larger class of words. Infinite words with the language closed under reversal and satisfying Property $\LR$ coincide with extensively studied aperiodic episturmian words. Nevertheless, Property $\LR$ may be satisfied by words whose language is not closed under reversal, as illustrated in~\cite{DrJuPi} by the following example. It shows also that Property $\LR$ does not guarantee Properties $\C, \BO, \Ret, \P, \PE$.
\begin{example}[$\LR \not \Rightarrow$ closeness under reversal, $\C, \BO, \Ret, \P, \PE $] \label{ex:lrlr}
If we construct an infinite word $\u$ so that we replace $b$ with $bc$ in the Fibonacci word $abaababaabaabab\dots$,
 the fixed point of $\varphi: a \to ab, \ b \to a$,
 then $bc$ is a~factor of $\L$, however $cb$ not.
 It is easy to see that such a~word has still a~unique infinite RS and a~unique LS branch (the infinite word $\u$ itself).
 Consequently, Property $\LR$ is preserved.
 However, both of these infinite special branches have only two extensions,
 hence Property $\C$ (and $\BO$ as well) fails.
 The factor $c$ has only two return words $caab$ and $cab$,
 hence Property $\Ret$ does not hold.
 Moreover, as $\u$ is uniformly recurrent and its language is not closed under reversal,
 it contains by Theorem~\ref{unif_rec_pal} only a~finite number of palindromes.
 Therefore, Properties $\P$ and $\PE$ are not satisfied.
\end{example}

On the other hand, observing $k$-iet words, we learn that none of Properties $\C, \BO, \Ret, \P, \PE$ imply $\LR$.
The problem to describe the class of infinite words with Property $\LR$ whose language is not closed under reversal requires a~further study.

\subsection{Property $\Ret$}
Let us recall that infinite words with Property $\Ret$ are necessarily uniformly recurrent.
If their language is not closed under reversal, then it cannot contain infinitely many palindromes by Theorem~\ref{unif_rec_pal}. Such words exist, as illustrated by the following example, therefore, Property $\Ret$ does not imply $\P$.
\begin{example}[$\Ret \not \Rightarrow \P$] \label{ex:bapest}
The fixed point $\u$ of $\varphi$, where
$\varphi(a)=aab, \ \varphi(b)=ac, \ \varphi(c)=a$, contains $bac$, but $cab$ is not its factor. The fact that
every factor of $\u$ has three return words is explained in~\cite{BaPeSt}
for a~whole class of infinite words coding $\beta$-integers.
\end{example}

We have seen that AR words and $k$-iet words have both Property $\Ret$ and $\C$, however, as shown in~\cite{Fer} by the following example, Property $\C$ does not imply Property $\Ret$ on multiliteral alphabets.
\begin{example}[$\C \not \Rightarrow \Ret$] \label{ex:ter_not_R}
The fixed point of $\varphi: a \to ab, \ b \to cab, c \to ccab$ -- the above mentioned recoding of the Chacon substitution -- has the complexity $2n+1$ for every $n \in \N$,
but contains more than three return words of certain factors
(for example the factor $bc$ has $4$ return words:  $bca$, $bcca$, $bcaba$ and $bccaba$.
\end{example}
The following theorems come from the paper~\cite{BaPeSt} that is devoted to the study of Property $\Ret$ for infinite words on multiliteral alphabets.
Let us observe once more AR words and $k$-iet words, these classes satisfy  not only Property $\C$, but also Property $\BO$. It is thus natural to ask whether Property $\BO$ guarantees $\Ret$. The corollary of the following theorem will provide an answer.
\begin{thm}\label{BOImplRet}
If $\u$ is an infinite word with no weak BS factors, then $\u$ has Property
$\Ret$ if and only if $\u$ is uniformly recurrent and satisfies $\C$.
\end{thm}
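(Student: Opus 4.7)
I would begin with the observation that, under the standing hypothesis that $\u$ has no weak BS factors, Property $\C$ is equivalent to Property $\BO$. Indeed, formula~\eqref{second_diff} gives $\Delta^2\C(n) = \sum_{w \in \L_n(\u)} \b(w)$ with every $\b(w) \geq 0$; Property $\C$ amounts to $\Delta\C \equiv k-1$, forcing $\b(w) = 0$ for all BS factors (hence $\BO$), while $\BO$ combined with $\Delta\C(0) = k-1$ gives $\C$. So the statement reduces to proving that, under the no-weak-BS assumption, $\Ret$ is equivalent to uniform recurrence together with $\BO$.

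For $(\Rightarrow)$, uniform recurrence follows immediately from $\Ret$ (finitely many return words force bounded gaps between occurrences of every factor). For $\BO$ I would argue contrapositively: a strong BS factor $v$ (with $\#\Bext(v) > \#\Lext(v) + \#\Rext(v) - 1$) creates, in the Rauzy graph $\Gamma_{|v|}$, an excess of simple cycles through an appropriate vertex extending $v$, producing a factor with strictly more than $k$ return words and contradicting $\Ret$.

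For $(\Leftarrow)$, fix a factor $w$ of length $n$. I would build the \emph{first-return tree} $\mathcal T_w$: its root is $w$, and at each node $wu \in \L$ in which $w$ occurs only as a prefix, the children are the right extensions $wua \in \L$; a node is declared a leaf as soon as $w$ appears as a suffix of $wu$. Uniform recurrence makes the tree finite, and its leaves are in bijection with $R(w)$, giving
\[
\#R(w) \;=\; 1 + \sum_{\substack{v \text{ internal RS}\\\text{node of } \mathcal T_w}} \bigl(\#\Rext(v) - 1\bigr).
\]
I would then show the right-hand sum equals $k-1$ by exploiting $\BO$: for each internal node $v$ that is ordinary BS, the identity $\#\Bext(v) = \#\Lext(v) + \#\Rext(v) - 1$ synchronises the right-branching contribution with the unique incoming path in $\mathcal T_w$, and the remaining non-BS RS nodes in $\mathcal T_w$ account for pure right-branchings. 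Equivalently, viewing the passage $\Gamma_{n-1}\to\Gamma_n$ as successive splittings at ordinary BS factors, one shows that each such split preserves the number of simple cycles through the corresponding vertex, and induction on $n$ starting from $\#R(\varepsilon) = \#\A = k$ closes the argument.

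The main obstacle is the bookkeeping at BS nodes of $\mathcal T_w$: one must verify that $\mathcal T_w$ visits precisely the right RS branchings, each ordinary BS factor contributing exactly the branching predicted by $\b(v)=0$, with no double counting. The $\BO$ hypothesis is essential here: it prevents either a surplus (strong BS) or a deficit (weak BS, excluded by assumption) in the accumulated branching budget, so the leaf count comes out to exactly $k$.
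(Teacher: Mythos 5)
The paper itself does not prove this theorem; it quotes it from \cite{BaPeSt}, so there is no in-paper argument to compare against, and your proposal has to stand on its own. Your opening reduction --- that under the no-weak-BS hypothesis Property $\C$ is equivalent to $\BO$ via \eqref{second_diff} --- is correct and matches the remark the authors make right after the theorem. The forward direction is also essentially right, though asserted rather than proved: by recurrence the realized first-return loops at the vertex $w$ cover all of $\Gamma_{|w|}$, a union of $r$ loops through a common vertex has first Betti number at most $r$, and the Betti number of $\Gamma_{|w|}$ is $\Delta\C(|w|)+1$; hence $\#R(w)\geq 1+\Delta\C(|w|)$, and since no weak BS factors makes $\Delta\C$ non-decreasing from $\Delta\C(0)=k-1$, a strong BS factor would push $\Delta\C(n)$ to at least $k$ and give some factor at least $k+1$ return words.

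The genuine gap is the upper bound $\#R(w)\leq k$ in the converse direction, which is the real content of the theorem. Your first-return tree and the identity $\#R(w)=1+\sum\bigl(\#\Rext(v)-1\bigr)$ over internal RS nodes are fine, but the claim that this sum equals $k-1$ is precisely what has to be proved, and you leave it at ``I would then show\dots''. The mechanism you propose --- that each ordinary bispecial split ``preserves the number of simple cycles through the corresponding vertex'' --- cannot work at the level of the Rauzy graph alone: the number of first-return loops at a vertex of a strongly connected digraph is not controlled by its Betti number or by local splitting data (a cycle avoiding the base vertex can be traversed repeatedly before returning, so the count of abstract first-return loops can even be infinite), and the recoded Chacon word shows that the count of \emph{realized} loops can exceed $1+\Delta\C(n)$ as soon as weak bispecials appear ($\Delta\C(n)=2$ yet $bc$ has four return words). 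Any correct proof must use the word and not just the graph, e.g.\ by first reducing to bispecial $w$ via $R(aw)=aR(w)a^{-1}$ when $a$ is the unique left extension, and then exploiting ordinariness to decide which loops of the reduced Rauzy graph are actually return words; this is the substance of the argument in \cite{BaPeSt}, and it is the step your proposal acknowledges as ``the main obstacle'' without resolving it.
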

Let us underline, an infinite word $\u$ has Property $\BO$ if and only if it has Property $\C$ and contains no weak BS factors.
It results in the advertised corollary.
\begin{coro}
Let $\u$ be a~uniformly recurrent infinite word. Then
$$\BO \Rightarrow \Ret.$$
\end{coro}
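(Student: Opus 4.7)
The plan is to chain the observation stated just before the corollary with the theorem just proved. By the remark preceding the corollary, Property $\BO$ is equivalent to the conjunction of Property $\C$ and the absence of weak bispecial factors. So once I invoke that equivalence, I can immediately feed the hypotheses into Theorem~\ref{BOImplRet}.

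More concretely, I would first note the observation: using formula~\eqref{second_diff}, if every BS factor is ordinary then $\Delta^2 \C(n) = \sum_{w \in \mathcal{L}_n(\u)} \b(w) = 0$ for every $n$, which together with $\C(0)=1$ and $\C(1)=k$ forces $\C(n)=(k-1)n+1$; conversely if all bilateral orders are non-negative (no weak BS) and $\Delta^2\C(n)=0$, they must all vanish. This gives the equivalence $\BO \iff \C + $ no weak BS. I expect this bookkeeping step is the only thing that needs to be written out carefully; the rest is a direct citation.

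Then I would finish as follows: assume $\u$ is uniformly recurrent and has Property $\BO$. By the observation, $\u$ satisfies Property $\C$ and contains no weak BS factor. Theorem~\ref{BOImplRet} applied to $\u$ (which has no weak BS factors) says that Property $\Ret$ is equivalent to uniform recurrence together with Property $\C$. Both of these hold, so $\u$ has Property $\Ret$, which is exactly the desired implication $\BO \Rightarrow \Ret$.

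There is no real obstacle here, since Theorem~\ref{BOImplRet} does the work; the only subtle point to keep in mind is that uniform recurrence is needed on both sides of Theorem~\ref{BOImplRet} but is given in the corollary's hypothesis, so there is nothing additional to verify.
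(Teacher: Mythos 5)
Your proposal is correct and follows exactly the paper's route: the paper deduces the corollary from the remark that Property $\BO$ is equivalent to Property $\C$ together with the absence of weak bispecial factors, and then cites Theorem~\ref{BOImplRet}. The only difference is that you spell out the bookkeeping behind that remark via \eqref{second_diff}, which the paper leaves implicit; your verification of it is accurate.
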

If we restrict our consideration to the ternary alphabet, the implication can be reversed.
\begin{thm}\label{RetImplBO}
Let $\u$ be a~ternary uniformly recurrent infinite word.
Then
$$\BO \Leftrightarrow \Ret.$$
\end{thm}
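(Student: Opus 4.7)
The direction $\BO \Rightarrow \Ret$ is the preceding Corollary, so only the converse requires proof. Assume $\u$ is a ternary uniformly recurrent word satisfying $\Ret$. The first step is to invoke Theorem~\ref{BOImplRet}: since ``no weak BS factors'' combined with $\Ret$ and uniform recurrence yields Property~$\C$, which together with ``no weak BS factors'' is precisely $\BO$, it suffices to show that $\u$ contains no weak bispecial factor.

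Suppose for contradiction that $w \in \L$ is weak BS, so $\b(w) < 0$. Since $w$ is bispecial on a three-letter alphabet, $\#\Lext(w), \#\Rext(w) \in \{2, 3\}$, and combined with $\#\Bext(w) \geq \max(\#\Lext(w), \#\Rext(w))$ this leaves only five configurations $(\#\Lext(w), \#\Rext(w), \#\Bext(w))$: $(2, 2, 2)$, $(2, 3, 3)$, $(3, 2, 3)$, $(3, 3, 3)$ and $(3, 3, 4)$. In each configuration the set $\Bext(w)$ partitions the $w$-occurrences in $\u$ into \emph{types}, a type being a pair (preceding letter, following letter). The plan is then to study the directed multigraph $G_w$ whose vertices are the realized types and whose edges, labelled by the elements of $R(w)$, go from the type of the $w$-occurrence where the return word begins to the type of the occurrence where it ends. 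Uniform recurrence forces $G_w$ to be strongly connected with every vertex of in- and out-degree at least $1$, while $\Ret$ forces $G_w$ to have exactly three edges.

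The central case is $(2, 2, 2)$: with $\Lext(w) = \{a, b\}$, $\Rext(w) = \{c, d\}$ and, up to relabelling, $\Bext(w) = \{awc, bwd\}$, each $w$-occurrence has type $\mathrm{I} = (a, c)$ or $\mathrm{II} = (b, d)$, each type is determined by either of its two coordinates, and the identifications $R(aw) = R(awc)$, $R(bw) = R(bwd)$ hold because $aw$ is always followed by $c$ and $bw$ by $d$. By $\Ret$ all four sets have cardinality three. The number of first-return paths at vertex $\mathrm{I}$ (resp.\ $\mathrm{II}$) in $G_w$ is an upper bound on $|R(awc)|$ (resp.\ $|R(bwd)|$). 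A direct enumeration of all $2$-vertex, $3$-edge, strongly connected multigraphs shows that in each of them the smaller of these two counts is at most $2$, which contradicts the value $3$ demanded by $\Ret$.

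The remaining configurations are treated by variants of the same argument. In $(3, 3, 4)$ the four types outnumber the three edges of $G_w$, so some vertex has out-degree zero, contradicting uniform recurrence. In $(2, 3, 3)$, $(3, 2, 3)$ and $(3, 3, 3)$ there are three types and three edges, and the positive in/out-degree constraint forces $G_w$ to be a permutation of its three vertices; each of the three permutation types (identity; a transposition plus a fixed point; a $3$-cycle) contradicts either uniform recurrence (by isolating a proper non-empty subset of types) or the value $|R(xwy)| = 3$ forced by $\Ret$ for a bilateral extension $xwy$ at whose vertex only a single first-return loop exists. The main obstacle is keeping the bookkeeping tight whenever $G_w$ contains loops: abstractly one can then construct infinitely many first-return paths by inserting copies of the loop, while only three are realized in $\u$, so the contradictions must always be drawn from the \emph{unlooped} vertex (in $(2,2,2)$) or from purely combinatorial edge-versus-vertex counts (in the other cases).
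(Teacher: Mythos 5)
The paper does not actually prove Theorem~\ref{RetImplBO}; it only cites~\cite{BaPeSt}, so your argument has to stand on its own. Your reduction is fine: by the remark after Theorem~\ref{BOImplRet}, $\BO$ is equivalent to ``$\C$ and no weak BS factors,'' and Theorem~\ref{BOImplRet} converts $\Ret$ plus uniform recurrence plus ``no weak BS factors'' into $\C$, so it indeed suffices to exclude weak bispecial factors. Your list of five configurations is correct, and your treatment of the central case $(2,2,2)$ is sound: there $\Bext(w)$ is a perfect matching between $\Lext(w)$ and $\Rext(w)$, so each return word of $w$ determines the full type of both of its endpoint occurrences, $G_w$ really has two vertices and three edges, $R(aw)=R(awc)$ and $R(bw)=R(bwd)$ hold, and the enumeration of strongly connected $2$-vertex, $3$-edge multigraphs does force one of the two first-return counts to be at most $2$. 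The $(3,3,3)$ case is also fine, since there too $\Bext(w)$ is a perfect matching.

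The gap is in the remaining three configurations. A return word of $w$ determines only the \emph{right} extension of the occurrence where it begins and the \emph{left} extension of the occurrence where it ends; it pins down the full types of its endpoints only when every letter of $\Rext(w)$ (resp.\ $\Lext(w)$) occurs in exactly one element of $\Bext(w)$. In the configurations $(2,3,3)$, $(3,2,3)$ and $(3,3,4)$ some extension letter occurs in two types, so a single return word is realized as an edge between several pairs of types, and $G_w$ is \emph{not} a multigraph with exactly three edges on the set of types. Concretely, in $(3,3,4)$ the return word attached to the doubled right extension serves as the outgoing transition of two distinct types, so no vertex is forced to have out-degree zero and your contradiction disappears; in $(2,3,3)$ the realized transition graph can have five edges and need not be a permutation (e.g.\ with types $(a,c_1),(a,c_2),(b,c_3)$ one gets edges $\mathrm{I}\to\mathrm{I}$, $\mathrm{I}\to\mathrm{II}$, $\mathrm{II}\to\mathrm{III}$, $\mathrm{III}\to\mathrm{I}$, $\mathrm{III}\to\mathrm{II}$, which is strongly connected and survives your test). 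These cases can still be excluded, but by a different mechanism: since $\#R(w)=3=\#\Rext(w)$ in the $(\cdot,3,\cdot)$ cases, each right extension carries a \emph{unique} return word, hence the whole type trajectory, and therefore the return word of $aw$, is determined by the right extension of the starting occurrence; as only two types carry the doubled left extension $a$, one gets $\#R(aw)\le 2$, contradicting $\Ret$ (with a symmetric argument for $(3,2,3)$ and a short sub-case analysis over the bijection between starting right extensions and ending left extensions in $(3,3,4)$). As written, your proof of the three non-matching configurations does not go through.
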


As soon as the alphabet has more than three letters,
Property $\Ret$ does not imply Property $\BO$ any more.
\begin{example}[$\Ret \not \Rightarrow \BO$] \label{ex:ter_R_not_C}
The uniformly recurrent infinite word
$\u=\lim_{n \to \infty}\varphi^n(a)$, where
$$
\varphi(a)=acbca, \ \varphi(b)=acbcadbdaca, \ \varphi(c)=dbcbdacadbd, \ \varphi(d)=dbcbd,
$$
satisfies $\Ret$, but not $\C$ (since $\mathcal C(n)$ is even for all $n \in \mathbb N$) and $\u$ contains, of course, weak BS factors. For details consult~\cite{BaPeSt}.
\end{example}
The question whether there exists a~nice characterization of words with Property $\Ret$ on alphabets with more than three letters remains open.
\subsection{Property ${\P}$ and $\PE$}
The paper~\cite{BaPeSta} is focused on the study of Properties ${\P}$ and $\PE$.
As soon as an infinite word $\u$ has Property $\PE$, then $\u$ has exactly one infinite palindromic branch with center $a$ for every letter $a \in \A$ and one infinite palindromic branch with center $\varepsilon$. Therefore, $\u$ contains exactly $\#\A$ palindromes for every odd length (central factors of palindromic branches with centers $a \in \A$) and one palindrome for every even length (central factor of the infinite palindromic branch with center $\varepsilon$). Consequently, Property $\P$ is also satisfied by $\u$.

Let us recall that Property $\P$ may be reformulated in the following way
\begin{equation}
\P(n+2)=\P(n) \quad \text{for all $n \in \N$,}
\end{equation}
where $\P(0)=1.$
We will equally use both of the forms of Property $\P$.

Let $\u$ be an infinite word satisfying $\PE$. The language $\L$ contains infinitely many palindromes, but it need not be closed under reversal,
neither recurrent nor rich as illustrated by the following example.
\begin{example}[$\PE \not \Rightarrow$ closeness under reversal, $\PE \not \Rightarrow$ richness] \label{ex:inf_pals_not_rec}
The infinite word $\u$ on the alphabet $\{a,b,c\}$ defined in the following way:
$$\u=c\ a\underbrace{\ c \ c \ }_{2\times}b\underbrace{ccc}_{3\times}a\underbrace{cccc}_{4\times}b\underbrace{ccccc}_{5\times}
a\underbrace{cccccc}_{6\times}b\underbrace{ccccccc}_{7\times}a\dots$$
has three infinite palindromic branches with centers $a, b$ and $c$
$$\dots cccaccc\dots, \quad \dots cccbccc\dots, \quad \dots ccccccc\dots$$
and one infinite palindromic branch with central factors of even length of the form $\dots cccccccc \dots$
Indeed, $\u$ has the factor $accb$, however its mirror image $bcca$ does not belong to the language $\L$.
Moreover, $\u$ is not rich since the prefix $caccbccca$ of length $9$ contains only $9$ palindromes: \\
$\varepsilon$, $a$, $b$, $c$, $cc$, $cac$, $cbc$, $ccc$ and $ccbcc$.
\end{example}
However, if the language $\L$ is closed under reversal, then it is possible to say more about the relation of Properties $\P$ and $\C$ and the richness of $\u$. When both $\P$ and $\C$ are satisfied, the equality in~\eqref{fac_pal} is reached. Application of Theorem~\ref{rich_opulent} provides us with the following corollary.
\begin{coro}\label{PCrich}
Let $\u$ be an infinite word whose language is closed under reversal. Then
$$\P + \C \Rightarrow \text{richness of $\u$}.$$
\end{coro}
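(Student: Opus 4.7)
The plan is to deduce the equality $\P(n+1)+\P(n) = \Delta\C(n)+2$ directly from the two hypotheses, and then invoke Theorem~\ref{rich_opulent} (which applies because the language is closed under reversal) to conclude richness.

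First I would compute $\Delta\C(n)$ from Property $\C$. Since $\C(n) = (k-1)n+1$ for every $n \in \N$, a~one-line calculation gives $\Delta\C(n) = \C(n+1)-\C(n) = k-1$, so that $\Delta\C(n) + 2 = k+1$. Next I would read off the left-hand side from Property $\P$, which asserts exactly $\P(n)+\P(n+1) = k+1$ for all $n \in \N$. Combining the two identities yields $\P(n+1)+\P(n) = \Delta\C(n)+2$ for every $n \in \N$.

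At this point the hypothesis that $\L$ is closed under reversal lets us apply Theorem~\ref{rich_opulent}, whose ``if'' direction states precisely that the above equality for all $n$ is equivalent to richness of $\u$. Hence $\u$ is rich, which is what we wanted.

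There is no real obstacle here: both hypotheses are stated as numerical equalities, and their sum telescopes to the hypothesis of Theorem~\ref{rich_opulent}. The only point worth flagging is that closure of $\L$ under reversal is essential, because Theorem~\ref{rich_opulent} is stated only under that assumption; without it, Theorem~\ref{Balazi_fac_pal} would give merely an inequality and one could not reverse the implication.
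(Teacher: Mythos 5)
Your proof is correct and follows exactly the paper's route: Property $\C$ gives $\Delta\C(n)=k-1$, Property $\P$ gives $\P(n)+\P(n+1)=k+1=\Delta\C(n)+2$, so the equality in~\eqref{fac_pal} is attained and Theorem~\ref{rich_opulent} (applicable since $\L$ is closed under reversal) yields richness. Nothing is missing.
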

The first example shows that Property $\P$ itself does not guarantee richness even if the language is closed under reversal. The second one illustrates that the implication in~Corollary~\ref{PCrich} cannot be reversed.
\begin{example}[$\PE \not \Rightarrow $ richness, $\PE \not \Rightarrow \C$] \label{ex:bo}
A known example of an infinite word with the language closed under reversal and with a~higher factor
complexity is the billiard sequence on three letters, for which $\C(n) = n^2 + n + 1$.
As shown in~\cite{Bo}, such words satisfy Property $\PE$, hence $\P$ as well.
Consequently, billiard sequences do not reach the upper bound in~\eqref{fac_pal} and by Theorem~\ref{rich_opulent} cannot be rich.
\end{example}
\begin{example}[richness $\not \Rightarrow \P$, richness $\not \Rightarrow \C$] \label{ex:rovnost_bez_C}
Let $\varphi$ be defined on an $m$-letter alphabet as follows:
$$
\varphi(0) = 0^t1, \quad \varphi(1) = 0^t2, \quad \ldots, \varphi(m-2) = 0^t(m-1), \quad \varphi(m-1) = 0^s,
$$
where $s,t \in \N$ and $t \geq s \geq 2$.
The fixed point $\u$ of $\varphi$ satisfies the equality $\P(n+1)+\P(n)=\Delta \C(n)+2$ for all $n$. As the language is closed under reversal, by Theorem~\ref{rich_opulent} $\u$ is rich.
Property $\P$ is not satisfied since the sum $\P(n+1) + \P(n)$ is not constant.
Further properties of palindromes in $\u$ can be found in~\cite{AmMaPeFr}.
\end{example}

Let us examine in the sequel the connection between Properties $\C$ and $\P$, resp. $\C$ and $\PE$.
\subsubsection{Ternary alphabet}
Let us limit our considerations to the ternary alphabet. The following theorem and examples come from~\cite{BaPeSta}.
\begin{thm}\label{PandPE}
Let $\u$ be an infinite ternary word with the language closed under reversal. Then
\begin{enumerate}
\item
$\mathcal C \Rightarrow \P,$
\item $\BO \Rightarrow \PE.$
\end{enumerate}
\end{thm}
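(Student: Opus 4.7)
The plan is to combine the Balazi inequality (\Cref{Balazi_fac_pal}) for the upper bound with an induction on $n$ for the lower bound. Balazi gives $\P(n+1)+\P(n)\leq \Delta\C(n)+2=4$ since $\C(n)=(k-1)n+1=2n+1$, and equality already holds at $n=0$ because $\P(0)=1$ and $\P(1)=k=3$. For the inductive step two identities are decisive: from $\C(n)=2n+1$ one has $\sum_{w\in\mathcal{L}_n(\u)}\b(w)=\Delta^2\C(n)=0$, and from closure of $\mathcal{L}(\u)$ under reversal every non-bispecial palindrome $w$ satisfies $\#\Pext(w)=1$ (its unique two-sided extension $bwa$ satisfies $bwa=\overline{bwa}=awb$, forcing $a=b$); hence $\P(n+2)-\P(n)=\sum_{w\in\mathcal{L}_n(\u),\,w=\overline w,\,w\,\text{BS}}(\#\Pext(w)-1)$. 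The aim is to prove this sum vanishes.

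Since $\Delta\C(n)=2$, the right-extension profile on $\mathcal{L}_n(\u)$ is of only two kinds: a single factor with $\#\Rext=3$, or two distinct factors each with $\#\Rext=2$. Closure under reversal forces a mirror-symmetric left-extension profile. I would split $\sum\b(w)=0$ between palindromic BS and reversal-pairs of non-palindromic BS (the bilateral orders agree on each such pair) and apply \Cref{prop_more_than_1} (the parity of $\b(w)$ differs from that of $\#\Pext(w)$ for palindromic BS) together with the trivial bounds $\#\Pext(w)\leq\#\Rext(w)\leq 3$. A short case analysis then yields $\sum_{w=\overline w,\,w\,\text{BS}}(\#\Pext(w)-1)=0$. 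The delicate subcase is a single palindromic BS of type $\#\Rext=\#\Lext=3$, where parity alone allows $\#\Pext\in\{1,3\}$; the value $3$ must be excluded because it produces $\P(n+2)=\P(n)+2$ which, propagated forward, violates the Balazi bound at some subsequent level. Making this forward-propagation argument watertight is the main obstacle of the proof.

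For Part (2) the plan is to reduce to Part (1) via the richness characterization. Property $\BO$ forces $\b(w)=0$ for every BS factor, hence $\Delta^2\C(n)=0$ for all $n$; together with $\Delta\C(0)=k-1=2$ this gives $\C(n)=2n+1$, so Property $\C$ holds and Part (1) provides Property $\P$. Since $\mathcal{L}(\u)$ is closed under reversal, the ``equality implies richness'' direction of \Cref{rich_opulent} then shows that $\u$ is rich. Now \Cref{thm:minus1} gives $\b(w)=\#\Pext(w)-1$ for every palindromic BS $w$; combined with $\b(w)=0$ from $\BO$ this yields $\#\Pext(w)=1$. Non-bispecial palindromes already satisfy $\#\Pext=1$ by the same reversal argument used in Part (1), so every palindrome of $\u$ has a unique palindromic extension, i.e.\ Property $\PE$ holds.
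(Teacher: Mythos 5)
Your argument is correct, but note first that the paper does not actually prove Theorem~\ref{PandPE}: the statement is quoted from~\cite{BaPeSta}, so there is no in-paper proof to match, and your proposal has to be judged on its own. For part (1), your route --- induction on $n$, the upper bound $\P(n+1)+\P(n)\leq\Delta\C(n)+2=4$ from Theorem~\ref{Balazi_fac_pal}, the identity $\P(n+2)-\P(n)=\sum\bigl(\#\Pext(w)-1\bigr)$ taken over palindromic bispecial factors $w$ of length $n$ (non-bispecial palindromes contributing $0$ by the reversal argument you give), and a case analysis of the extension profiles forced by $\Delta\C(n)=2$ together with Proposition~\ref{prop_more_than_1} --- is sound; in each configuration (one special factor of degree $3$, two of degree $2$, palindromic or paired non-palindromic) the sum is nonnegative, which is all you need. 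The one place where you undersell your own argument is the ``delicate subcase'': excluding $\#\Pext(w)=3$, or more generally a strictly positive sum, requires no forward propagation at all. If $\P(N+2)\geq\P(N)+2$, then the induction hypothesis $\P(N)+\P(N+1)=4$ gives $\P(N+1)+\P(N+2)\geq 6>\Delta\C(N+1)+2=4$, contradicting Theorem~\ref{Balazi_fac_pal} immediately at level $N+1$; this is precisely the device used in the paper's proof of Theorem~\ref{th:mr1}, so the step you flag as the main obstacle is a one-line consequence of what you already have. Part (2) as you prove it (via $\BO\Rightarrow\C$, part (1), Theorem~\ref{rich_opulent} and Theorem~\ref{thm:minus1}) is valid, but it is worth observing that it is subsumed by Theorem~\ref{th:mr1}: since $\BO$ forces $\Delta^2\C(n)=0$ and hence Property~$\C$, the ($\Leftarrow$) direction of that theorem already yields $\BO\Rightarrow\PE$ for any word with language closed under reversal over an arbitrary alphabet, with no ternary hypothesis and without first establishing Property~$\P$ or richness.
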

The implication in Theorem~\ref{PandPE} cannot be reversed. We have already illustrated in Example~\ref{ex:bo} that even the stronger property $\PE$ does not ensure $\C$.
Let us provide one more counterexample - a~fixed point of a~substitution.
\begin{example}[$\PE \not \Rightarrow \C$] \label{ex:ter_PE_not_C}
Denote by $\u$ the infinite ternary word being the fixed point of the substitution $\Phi$ defined by
\begin{equation}\label{U}
\Phi(a)=aba, \quad \Phi(b)=cac, \quad \Phi(c)=aca.
\end{equation}
Then the language of $\u$ is closed under reversal. On one hand, $\u$ has Property ${\mathcal {PE}}$, consequently, $\u$ has Property $\P$, too.
On the other hand, Property $\C$ fails and $\L$ contains infinitely many weak BS factors.
\end{example}

Properties $\P$ and $\PE$ are equivalent for binary words. However already for ternary words, the implication $\P \Rightarrow \PE$ does not hold any more.
\begin{example}[$\P \not \Rightarrow \PE$] \label{ex:ter_P_not_PE}
Let $\mathbf v$ be the ternary infinite word defined by ${\mathbf v}=\Psi(\u)$, where
$\Psi:\{A,B\}^{*}\to \{a,b,c\}^{*}$ is the morphism given by
\begin{equation*}\label{Psi}
\Psi(A)=bc \quad \text{and} \quad \Psi(B)=baa,
\end{equation*}
and $\u$ is the fixed point of the substitution $\varphi$
defined by
\begin{equation*}\label{varphi}
\varphi(A)=ABBABBA, \quad \varphi(B)=ABA.
\end{equation*}
Then $\mathbf v$ satisfies ${\P}$, but does not satisfy $\PE$.
\end{example}
The relation between $\Ret$ and $\P$ follows from Theorem~\ref{PandPE} and Theorem~\ref{RetImplBO}.
\begin{coro}
Let $\u$ be an infinite ternary word with the language closed under reversal. Then  $$\mathcal R \Rightarrow \PE.$$
\end{coro}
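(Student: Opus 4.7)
The plan is to bridge Properties $\Ret$ and $\PE$ through Property $\BO$, using the two theorems cited immediately before the corollary. The scheme is
$$\Ret \ \Longrightarrow \ \BO \ \Longrightarrow \ \PE,$$
where the first implication will come from Theorem~\ref{RetImplBO} (restricted to the ternary, uniformly recurrent setting) and the second from the second item of Theorem~\ref{PandPE} (where the closure of $\L$ under reversal is used).

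First I would observe that Property $\Ret$ itself forces uniform recurrence of $\u$. This has already been noted in the subsection on Property $\Ret$: since every factor has exactly $k$ return words, in particular finitely many, and since the existence of return words already makes $\u$ recurrent, the general criterion recalled in the preliminaries (a recurrent word is uniformly recurrent iff the set of return words of each factor is finite) applies. Hence on the ternary alphabet we are in the situation covered by Theorem~\ref{RetImplBO}, which directly yields $\BO$.

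Next, with $\BO$ established and with $\L$ closed under reversal by hypothesis, the second item of Theorem~\ref{PandPE} applies on the ternary alphabet and delivers $\PE$. Concatenating the two implications finishes the argument.

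There is essentially no obstacle: the corollary is stated precisely so that it is obtained by composing the two preceding results, and the only small check is that Property $\Ret$ indeed implies uniform recurrence, which has already been recorded in the paper. No new combinatorial construction is needed.
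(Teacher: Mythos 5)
Your proof is correct and follows exactly the route the paper takes: the corollary is obtained by composing Theorem~\ref{RetImplBO} ($\Ret \Rightarrow \BO$ for ternary uniformly recurrent words) with the second item of Theorem~\ref{PandPE} ($\BO \Rightarrow \PE$ under closure under reversal), together with the observation, already recorded in the paper, that $\Ret$ forces uniform recurrence. Nothing further is needed.
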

The implication cannot be reversed.
\begin{example}[$\PE \not \Rightarrow \Ret$] \label{ex:no_Ret}
Consider the fixed point $\u$ of the substitution in~\eqref{U}.
As mentioned above, $\u$ contains weak BS factors.
Then by Theorem~\ref{RetImplBO}, $\u$ does not satisfy $\Ret$.
\end{example}
Putting together Theorems~\ref{PandPE} and Corollary~\ref{PCrich}, we obtain one more corollary.
\begin{corollary}
Let $\u$ be an infinite ternary word with the language closed under reversal.
Then $$\C \Rightarrow \text{richness of $\u$}.$$
\end{corollary}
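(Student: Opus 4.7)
The plan is to derive this corollary by chaining together two results that have already appeared in the excerpt, so no new machinery is needed. The hypotheses are exactly those of \Cref{PandPE} (ternary alphabet, language closed under reversal), and the hypothesis $\C$ is exactly the input needed for its first implication. Meanwhile \Cref{PCrich} takes $\C+\P$ together with closure under reversal and returns richness. So the goal is simply to bridge the two via $\P$.

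First I would invoke \Cref{PandPE}(1): since $\u$ is an infinite ternary word whose language $\L$ is closed under reversal and which satisfies $\C$, it also satisfies $\P$, i.e.\ ${\mathcal P}(n)+{\mathcal P}(n+1)=k+1=4$ for every $n\in\N$. At this point $\u$ fulfils both $\C$ and $\P$ while retaining the standing assumption that $\L$ is closed under reversal.

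Next I would apply \Cref{PCrich}, which asserts that an infinite word whose language is closed under reversal and which satisfies $\C+\P$ is rich. The hypotheses are met by the previous step, and richness of $\u$ follows at once.

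I do not anticipate any real obstacle: the statement is a pure syllogism of the form $\C \Rightarrow \C+\P \Rightarrow \text{richness}$, where the first arrow is \Cref{PandPE}(1) (which is where ternariness is used) and the second is \Cref{PCrich} (which is where the reversal-closure assumption is used in combination with \Cref{rich_opulent} and \Cref{Balazi_fac_pal} behind the scenes). No induction, no case analysis on bispecial factors, and no direct inspection of palindromes is required in the proof of the corollary itself.
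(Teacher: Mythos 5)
Your proof is correct and is exactly the paper's argument: the corollary is stated there as an immediate consequence of combining Theorem~\ref{PandPE}(1) (ternary, closed under reversal, $\C\Rightarrow\P$) with Corollary~\ref{PCrich} ($\P+\C$ plus closure under reversal gives richness). Nothing further is needed.
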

In contrast with Corollary~\ref{PCrich}, we see that on a~ternary alphabet already Property $\C$ itself ensures richness.

Neither in this case, the reversed implication holds. Consult Example~\ref{ex:rovnost_bez_C} or the following example with a~periodic word.
\begin{example}[$\text{richness} \not \Rightarrow \C$] \label{ex:rich_bounded_comp}
The periodic infinite word $(abcba)^{\omega}$ is rich
(since return words of palindromic factors are palindromes) and has a~bounded complexity.
\end{example}
\subsubsection{Multiliteral alphabet}
In this section, two new theorems concerning Properties $\P$ and $\PE$ for multiliteral infinite words will be proved.
\begin{thm}\label{th:mr1}
Let $\u$ be an infinite word with the language closed under reversal.
$$\text{Assume $\C$:} \quad \PE \Leftrightarrow \BO.$$
\end{thm}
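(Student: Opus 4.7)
The plan is to reduce both implications to the interplay between palindromic complexity and bilateral orders, using closure of $\mathcal{L}(\u)$ under reversal together with the elementary counting identity
$$\P(n+2) \;=\; \sum_{\substack{p \in \mathcal{L}_n(\u) \\ p = \overline p}} \#\Pext(p),$$
which holds because every palindrome of length $n+2$ is uniquely of the form $apa$ for a palindrome $p$ of length $n$ and a letter $a$.

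For the direction $\PE \Rightarrow \BO$, under $\PE$ each summand above equals $1$, so $\P(n+2) = \P(n)$. Since $\P(0) = 1$ and $\P(1) = k$, this forces $\P(2m) = 1$ and $\P(2m+1) = k$, hence $\P(n) + \P(n+1) = k + 1 = \Delta\C(n) + 2$ by~$\C$. Theorem~\ref{rich_opulent} then gives that $\u$ is rich, and Theorem~\ref{thm:minus1} immediately yields $\b(w) = 0$ for non-palindromic BS factors and $\b(w) = \#\Pext(w) - 1 = 0$ for palindromic BS factors; hence $\BO$ holds.

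For the converse $\BO \Rightarrow \PE$, I will first argue that every non-bispecial palindrome $p$ already satisfies $\#\Pext(p) = 1$: closure under reversal gives $\Lext(p) = \Rext(p)$ for palindromic $p$, so $p$ not BS means both sets reduce to a single common letter $a$, and since every occurrence of $p$ is followed by $a$ while $ap \in \mathcal{L}(\u)$, we obtain $apa \in \mathcal{L}(\u)$. The counting identity above therefore simplifies to
$$\P(n+2) - \P(n) \;=\; \sum_{\substack{p \in \mathcal{L}_n(\u), \ p = \overline p \\ p \ \text{is BS}}} \bigl( \#\Pext(p) - 1 \bigr).$$
Under $\BO$, each palindromic BS factor $p$ has $\b(p) = 0$, so by Proposition~\ref{prop_more_than_1} the number $\#\Pext(p)$ is odd, in particular $\geq 1$. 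This yields $\P(n+2) \geq \P(n)$ for every $n$, and by iteration $\P(2m) \geq 1$ and $\P(2m+1) \geq k$. Combining with the upper bound $\P(n) + \P(n+1) \leq \Delta\C(n) + 2 = k + 1$ from Theorem~\ref{Balazi_fac_pal} and $\C$, we conclude equality $\P(2m) = 1$ and $\P(2m+1) = k$, hence $\P(n+2) = \P(n)$ for every $n$. The nonnegative summands therefore all vanish, giving $\#\Pext(p) = 1$ for every palindromic BS factor. Combined with the non-BS case, $\PE$ holds.

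The main obstacle is the direction $\BO \Rightarrow \PE$: ruling out palindromic BS factors with $\#\Pext(p) \geq 3$ requires the simultaneous use of the palindromic complexity bound of Theorem~\ref{Balazi_fac_pal} and the parity constraint of Proposition~\ref{prop_more_than_1}, both of which rely essentially on closure of $\mathcal{L}(\u)$ under reversal; without this hypothesis neither tool is available and the equivalence should fail.
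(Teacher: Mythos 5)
Your proof is correct and follows essentially the same route as the paper's: the direction $\PE\Rightarrow\BO$ goes through Property $\P$, richness (Theorem~\ref{rich_opulent}) and the bilateral-order characterization of Theorem~\ref{thm:minus1}, while the direction $\BO\Rightarrow\PE$ combines the parity constraint of Proposition~\ref{prop_more_than_1} with the bound of Theorem~\ref{Balazi_fac_pal}. Your telescoping-and-sandwich presentation of the second direction is just a direct rephrasing of the paper's shortest-counterexample argument, so no genuinely new idea or gap is involved.
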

\begin{proof}
\noindent{($\Leftarrow$):}
Let us prove the statement by contradiction. Assume that Property $\BO$ holds and Property $\PE$ does not.
It is clear that the property $\PE$ can only be violated on a~palindromic BS factor.
By Property $\BO$, all palindromic factors have their bilateral order equal to zero.
By Proposition~\ref{prop_more_than_1}, they have an odd number of palindromic extensions, particularly at least one.

Since the language is closed under reversal, Theorem~\ref{Balazi_fac_pal} implies the inequality \eqref{fac_pal} for all $n \in \N$
\begin{equation*}
\label{PleqC}
\PC{n} + \PC{n+1} \leq 2 + \Delta \C(n).
\end{equation*}

Let $w$ denote the shortest palindromic BS factor that does not have exactly one palindromic extension.
Denote $N = |w|$. Then we have for all $n \leq N$,
$$
\PC{n} + \PC{n+1} = \#\A +1.
$$
Since Property $\BO$ implies Property $\C$, we have $2+\Delta \C(n)=2+(\# \A-1)$,
hence the equality in~\eqref{fac_pal} is attained for all $n\leq N$.

Since $w$ has to have at least $3$ palindromic extensions, one can see that $\PC{N+2} \geq \PC{N} + 2$.
Thus, we obtain $\P(N+1)+\P(N+2)\geq \P(N+1)+\P(N)+2=\# \A+3=\Delta \C(N+1)+4$, which is a~contradiction with \eqref{fac_pal}.
We conclude that Property $\PE$ holds.

\vspace{\baselineskip}
\noindent{($\Rightarrow$):}
Assume Property $\PE$ holds. Then Property $\P$ holds as well. By Corollary~\ref{PCrich} $\u$ is rich.
Consequently, we can apply Theorem~\ref{thm:minus1} and we obtain $\b(w)=0$ for all non-palindromic BS factors and $\b(w)=\#\Pext(w)-1$ for all palindromic BS factors.
By Property $\PE$ every palindromic BS factor has a~unique palindromic extension, thus $\b(w)=0$ for palindromic BS factors, too.
\end{proof}
Let us deduce several Corollaries of Theorem~\ref{th:mr1}. The most straightforward concerns richness and Property $\BO$.
It follows combining Theorems~\ref{th:mr1} and~\ref{rich_opulent}.
\begin{corollary}
Let $\u$ be an infinite word with the language closed under reversal. Then
$$\BO \Rightarrow \text{richness of $\u$}.$$
\end{corollary}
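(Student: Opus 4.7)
The plan is to chain together results already stated in the text, with $\BO$ feeding into $\PE$ via Theorem~\ref{th:mr1} and then $\PE+\C$ yielding richness through Corollary~\ref{PCrich} (which itself rests on Theorem~\ref{rich_opulent}).

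First I would observe that Property $\BO$ implies Property $\C$. This is immediate from the second difference formula~\eqref{second_diff}: if every bispecial factor $w$ satisfies $\b(w)=0$ (and non-bispecial factors contribute $\b(w)=0$ as well), then $\Delta^2\C(n)=0$ for every $n\in\N$, so $\Delta\C(n)$ is a constant. Combined with $\C(0)=1$ and $\C(1)=k=\#\A$, this forces $\C(n)=(k-1)n+1$.

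Next, with $\C$ in hand and the language closed under reversal, Theorem~\ref{th:mr1} applies and gives $\BO\Leftrightarrow\PE$; in particular our assumption $\BO$ yields $\PE$. As explained in the subsection on Properties $\P$ and $\PE$, the property $\PE$ immediately implies $\P$, since the palindromic branch structure forces exactly $\#\A$ palindromes of each odd length and one palindrome of each even length, so $\P(n)+\P(n+1)=k+1$.

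Finally, since $\u$ satisfies both $\P$ and $\C$, Corollary~\ref{PCrich} (applicable because $\L$ is closed under reversal) gives the richness of $\u$. There is no real obstacle here: the only delicate ingredient is Theorem~\ref{th:mr1}, and every other step is a direct invocation of a previously stated result. One could alternatively bypass Corollary~\ref{PCrich} by computing $\P(n+1)+\P(n)=k+1=(k-1)+2=\Delta\C(n)+2$ and applying Theorem~\ref{rich_opulent} directly, which is in fact the route Corollary~\ref{PCrich} itself takes.
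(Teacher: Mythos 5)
Your proposal is correct and follows essentially the same route as the paper, which states only that the corollary ``follows combining Theorems~\ref{th:mr1} and~\ref{rich_opulent}'': namely $\BO\Rightarrow\C$ via~\eqref{second_diff}, then $\BO\Rightarrow\PE$ by Theorem~\ref{th:mr1}, then $\PE\Rightarrow\P$, and finally $\P+\C$ plus closure under reversal gives richness via Theorem~\ref{rich_opulent} (i.e.\ Corollary~\ref{PCrich}). Your write-up simply makes explicit the intermediate steps the paper leaves implicit.
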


Putting together Theorems~\ref{unif_rec_pal}, \ref{BOImplRet} and~\ref{th:mr1}, we obtain the following corollaries.
\begin{coro}\label{PErichThenRet}
Let $\u$ be a~uniformly recurrent infinite word.
$$\text{Assume $\C$:} \quad \PE \Rightarrow \Ret.$$
\end{coro}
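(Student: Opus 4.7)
The plan is to chain together three previously established results: Theorem~\ref{unif_rec_pal} (to get closedness under reversal), Theorem~\ref{th:mr1} (to pass from $\PE$ to $\BO$), and Theorem~\ref{BOImplRet} (to conclude $\Ret$). The hypotheses we have in hand are uniform recurrence, $\C$, and $\PE$, so all three results should be applicable essentially verbatim.

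First, I would observe that $\PE$ forces $\u$ to contain infinitely many palindromes: starting from the empty word (or from any letter), the uniqueness of palindromic extension lets us produce a sequence of nested palindromic factors of strictly increasing length, giving palindromes of arbitrarily large length. Combined with uniform recurrence, Theorem~\ref{unif_rec_pal} then yields that the language $\L$ is closed under reversal. At this point we are in the setting of Theorem~\ref{th:mr1}: the language is closed under reversal and $\C$ holds, so $\PE$ is equivalent to $\BO$, and in particular $\BO$ holds.

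Finally, $\BO$ says that every bispecial factor is ordinary, so in particular $\u$ has no weak BS factors. Theorem~\ref{BOImplRet} then applies: for infinite words with no weak BS factors, Property $\Ret$ is equivalent to being uniformly recurrent together with Property $\C$. Both of those hold by assumption, so $\Ret$ follows.

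There is essentially no obstacle beyond checking that the hypotheses line up; the only slightly delicate point is making explicit that $\PE$ produces infinitely many palindromes so that Theorem~\ref{unif_rec_pal} can be invoked. Once that is observed, the corollary is a clean composition of Theorems~\ref{unif_rec_pal}, \ref{th:mr1}, and~\ref{BOImplRet}.
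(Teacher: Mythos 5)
Your proof is correct and follows exactly the route the paper intends: the paper states the corollary as obtained by ``putting together'' Theorems~\ref{unif_rec_pal}, \ref{BOImplRet} and~\ref{th:mr1}, and your chain (infinitely many palindromes from $\PE$, hence closure under reversal, hence $\BO$ via Theorem~\ref{th:mr1}, hence $\Ret$ via Theorem~\ref{BOImplRet}) is precisely that composition, with the useful extra care of noting why $\PE$ yields infinitely many palindromes.
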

The reversed implication does not hold.
Property $\Ret$ does not even guarantee the weaker property $\P$.
\begin{example}[$\Ret +\C \not \Rightarrow \P$] \label{ex:C_Ret_not_Cur}
Consider again the infinite word from the previous section: the fixed point $\u$ of $\varphi$, where
$\varphi(a)=aab, \ \varphi(b)=ac, \ \varphi(c)=a$. Properties $\C$ and $\Ret$ are satisfied (as explained in~\cite{BaPeSt}),
$\u$ is uniformly recurrent and the language $\L$ is not closed under reversal.
By Theorem~\ref{unif_rec_pal}, $\u$ contains only a~finite number of palindromes.
\end{example}
Notice that the assumptions in Corollary~\ref{PErichThenRet} imply that the language $\L$ is closed under reversal.
It is natural to ask whether the implication $\Ret \Rightarrow \PE$ holds for infinite words with the language closed under reversal.
The answer is however negative.
Property $\Ret$ does not imply even the weaker property $\P$.
\begin{example}($\Ret + \text{reversal closeness} \not \Rightarrow \P$) \label{ex:w2}
Consider again the uniformly recurrent infinite word from~\cite{BaPeSt} given by
$\u=\lim_{n \to \infty}\varphi^n(a)$, where
$$
\varphi(a)=acbca, \ \varphi(b)=acbcadbdaca, \ \varphi(c)=dbcbdacadbd, \ \varphi(d)=dbcbd,
$$
It satisfies $\Ret$, but $\C$ and $\BO$ are violated. It is not difficult to find infinitely many palindromes among weak BS factors.
Thus, the language $\L$ is closed under reversal.  However $\PE$ is not satisfied because $cbc, dbd \in \L$. Nor $\P$ holds since $\P(1)+\P(2)=4 \not =5$.
\end{example}
We see in the previous examples that to demand either only the closeness under reversal or only Property $\C$ in order to reverse the implication in Corollary~\ref{PErichThenRet} is not sufficient.
It is however not solved whether any infinite word with the language closed under reversal and having Properties
$\C$ and $\Ret$ satisfies Property $\PE$ or at least $\P$ as well.

\begin{coro}\label{biinf}
Let $\u$ be a~uniformly recurrent infinite word.
$$\text{Assume $\PE$:} \quad \text{richness of $\u$} \Leftrightarrow \Ret.$$
\end{coro}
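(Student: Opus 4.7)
The plan is to establish both implications by combining the new characterization \Cref{thm:minus1} with \Cref{BOImplRet} and \Cref{rich_opulent}. As a preliminary step, note that $\PE$ forces infinitely many palindromes in $\u$ (one on each palindromic branch), so uniform recurrence together with \Cref{unif_rec_pal} yields that $\L$ is closed under reversal; moreover $\PE$ gives $\P(n)+\P(n+1)=\#\A+1$ for every $n\in\N$.

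For the direction richness $\Rightarrow \Ret$, I would apply \Cref{thm:minus1} directly in this reversal-closed setting: under richness every non-palindromic bispecial factor satisfies $\b(w)=0$, and every palindromic bispecial factor satisfies $\b(w)=\#\Pext(w)-1$, which collapses to $0$ by the $\PE$ hypothesis. Hence $\u$ has Property $\BO$, and the corollary following \Cref{BOImplRet} (which asserts that $\BO$ together with uniform recurrence implies $\Ret$) finishes this direction.

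For the converse $\Ret \Rightarrow$ richness, I would first observe via \Cref{rich_opulent} that richness reduces to verifying Property $\C$, since under $\PE$ the target equality $\P(n+1)+\P(n)=\Delta\C(n)+2$ is exactly $\Delta\C(n)=\#\A-1$. The lower bound $\Delta\C(n)\geq \#\A-1$ is already provided by \Cref{Balazi_fac_pal}, so the remaining task is the matching upper bound. For this I would invoke \Cref{BOImplRet}: once it is known that $\u$ has no weak bispecial factor, uniform recurrence together with $\Ret$ will force $\C$; then \Cref{th:mr1} upgrades $\PE$ to $\BO$, and the corollary right after \Cref{th:mr1} (stating $\BO \Rightarrow$ richness under reversal closure) delivers the conclusion.

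The main obstacle is therefore the exclusion of weak bispecial factors under the combined hypotheses $\Ret$ and $\PE$. For palindromic bispecials, \Cref{prop_more_than_1} combined with $\#\Pext(w)=1$ only forces $\b(w)$ to be even, and for non-palindromic bispecials no parity constraint is available a priori. I expect to close this gap by a bipartite-graph argument in the spirit of \Cref{lemma:minus1}, replacing the richness hypothesis used there by the exact return-word count $\#R(w)=\#\A$ supplied by $\Ret$, which should force $\b(w)\geq 0$ on every bispecial factor and thus complete the reduction to \Cref{BOImplRet}.
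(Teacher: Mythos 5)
Your forward direction (richness $\Rightarrow \Ret$) is correct and is essentially the paper's argument traversed in a slightly different order: the paper derives Property $\C$ from richness and $\P$ via Theorem~\ref{rich_opulent} and then invokes Corollary~\ref{PErichThenRet}, whereas you pass through Theorem~\ref{thm:minus1} to obtain $\BO$ directly and then apply the corollary of Theorem~\ref{BOImplRet}; both chains bottom out at ``$\BO\Rightarrow\Ret$ for uniformly recurrent words,'' so this half is fine.

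The backward direction, however, contains a genuine gap. Your whole reduction rests on first excluding weak bispecial factors, and for that step you offer only the expectation that a bipartite-graph argument ``in the spirit of Lemma~\ref{lemma:minus1},'' with $\#R(w)=\#\A$ replacing richness, ``should force'' $\b(w)\geq 0$. That is precisely the point that cannot be waved through. In Lemma~\ref{lemma:minus1} the edges of the graph exist because the connecting factor $v$ is a palindrome --- a fact supplied by richness, i.e., by the very conclusion you are trying to establish; with $\Ret$ in place of richness there is no reason why $a\overline{w}b\in\L$ should imply $bwa\in\L$, and the edge set (hence the connectivity count) collapses. Moreover, Example~\ref{ex:w2} exhibits a uniformly recurrent word satisfying $\Ret$ with language closed under reversal that \emph{does} contain weak bispecial factors, so any correct argument must use $\PE$ in an essential way, and your sketch does not indicate where $\PE$ enters. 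The paper's proof of this implication is entirely different and avoids bilateral orders altogether: assuming $\u$ is not rich, Theorem~\ref{equiv_rich}(2) yields a palindromic factor $w$ with a non-palindromic complete return word; $\PE$ provides $\#\A+1$ distinct infinite palindromic branches, $w$ sits at the center of one of them, and uniform recurrence places $w$ symmetrically about the centers of the remaining $\#\A$ branches, producing $\#\A$ pairwise distinct \emph{palindromic} complete return words of $w$; together with the non-palindromic one this gives more than $\#\A$ return words, contradicting $\Ret$. You should either supply the missing exclusion of weak bispecials rigorously or replace your reduction by an argument of this kind.
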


\begin{proof}
Recall that by Theorem~\ref{unif_rec_pal}, the language is closed under reversal.\\
$(\Rightarrow)$: Suppose $\u$ is rich.
Then Property $\PE$ guarantees that Property $\P$ holds as well. Property $\P$ and the closeness of $\L$ under reversal together with Theorem~\ref{rich_opulent} implies $\C$ is also satisfied.
The statement follows then by Corollary~\ref{PErichThenRet}.\\
$(\Leftarrow)$: Let us prove the second implication by contradiction.
Assume $\Ret$ is satisfied and $\u$ is not rich. Theorem~\ref{equiv_rich} claims that there exists a~palindrome $w$ which has a~complete return word that is not a~palindrome itself.
As $\PE$ holds, the language has $\# \A + 1$ biinfinite palindromic branches. As $w$ is a~palindrome, we can find it in the middle of one branch.
Since $\u$ is uniformly recurrent, we can find $w$ in a~bounded distance from the center (on both sides) of the remaining $\# \A$ branches.
Thus we have $\# \A$ distinct palindromic complete return words of $w$. As $w$ was supposed to have a non-palindromic return word, we have a contradiction with~$\Ret$.
\end{proof}

In Theorem~\ref{th:mr1} for infinite words having Property $\C$, we have proved that Property $\PE$ coincides with Property $\BO$.
Under the same assumption on the complexity, we are again able to characterize Property $\P$ imposing this time a~weaker condition on bilateral orders of BS factors.
\begin{thm}
\label{th:mr2}
Let $\u$ be an infinite word with the language closed under reversal and satisfying Property~$\C$.
Then Property $\P$ holds if and only if any bispecial factor $w$ of $\u$ satisfies:
\begin{itemize}
\item if $w$ is non-palindromic, then
$$\b(w)= 0,$$
\item if $w$ is a~palindrome, then
$$\b(w)= \# \Pext(w) - 1.$$
\end{itemize}
\end{thm}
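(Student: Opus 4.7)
The plan is to reduce the claim to a combination of Theorem~\ref{rich_opulent} and Theorem~\ref{thm:minus1}, using the assumption on the factor complexity to make the chain of equivalences tight.

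First I would compute $\Delta \C$ under the hypothesis. Property $\C$ says $\C(n) = (k-1)n+1$ for all $n \in \N$, and in particular $\C(0)=1$, so $\Delta \C(n) = k-1$ for every $n \in \N$. Consequently the inequality in Theorem~\ref{Balazi_fac_pal} reads $\P(n)+\P(n+1)\leq k+1$, and the equality case $\P(n)+\P(n+1) = \Delta \C(n) + 2$ coincides exactly with Property~$\P$.

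Now I would invoke Theorem~\ref{rich_opulent}: since $\L$ is closed under reversal, $\u$ is rich if and only if $\P(n+1)+\P(n)=\Delta \C(n)+2$ for all $n$. By the previous observation this is, under Property~$\C$, the same as Property~$\P$. Hence, assuming $\C$ and reversal closure, Property~$\P$ is equivalent to richness of~$\u$. The theorem then follows immediately from Theorem~\ref{thm:minus1}, which states that a word whose language is closed under reversal is rich if and only if every bispecial factor $w$ has $\b(w)=0$ when $w$ is non-palindromic and $\b(w)= \#\Pext(w)-1$ when $w$ is a palindrome.

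I do not anticipate any serious obstacle: the two directions split cleanly. For ($\Rightarrow$), Property~$\P$ combined with $\Delta \C(n)=k-1$ gives the equality in~\eqref{fac_pal}, so Theorem~\ref{rich_opulent} yields richness, and Theorem~\ref{thm:minus1} delivers the bilateral-order conditions. For ($\Leftarrow$), the bilateral-order conditions give richness via Theorem~\ref{thm:minus1}, whence Theorem~\ref{rich_opulent} restores the equality $\P(n+1)+\P(n)=\Delta \C(n)+2=k+1$, which is Property~$\P$. The only mild subtlety worth double-checking is that Property~$\C$ is used in both directions solely to convert $\Delta \C(n)+2$ into the constant $k+1$; no additional hypothesis on recurrence is needed, because Theorems~\ref{rich_opulent} and~\ref{thm:minus1} already cover the reversal-closed setting.
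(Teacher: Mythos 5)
Your proposal is correct and follows essentially the same route as the paper: the paper's forward direction invokes Corollary~\ref{PCrich} (which is precisely your observation that $\P$ together with $\Delta\C(n)=k-1$ forces equality in~\eqref{fac_pal}, hence richness by Theorem~\ref{rich_opulent}) and then applies Theorem~\ref{thm:minus1}, while the converse direction runs the same two theorems in reverse exactly as you describe. No gaps.
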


\begin{proof}
\noindent{($\Leftarrow$):} Theorem~\ref{thm:minus1} implies that $\u$ is rich. Since the language is closed under reversal, we can use Theorem~\ref{rich_opulent}.
By Property $\C$, we have $\P(n+1)+\P(n)=\Delta \C(n)+2=\#\A+1$, thus Property $\P$ holds.

\vspace{\baselineskip}
\noindent{$(\Rightarrow)$:}
Corollary~\ref{PCrich} states that $\u$ is rich. The statement about bilateral orders follows then by Theorem~\ref{thm:minus1}.
\end{proof}
This theorem may be immediately reformulated using Theorem~\ref{thm:minus1}.
\begin{corollary}
Let $\u$ be an infinite word with the language closed under reversal.
$$\text{Assume $\C$:} \quad \P \Leftrightarrow \text{richness of $\u$}.$$
\end{corollary}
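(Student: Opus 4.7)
The plan is to recognize this corollary as essentially a repackaging of Theorem~\ref{th:mr2} via Theorem~\ref{thm:minus1}, since both theorems express their target property (respectively Property $\P$ under $\C$, and richness) through the \emph{same} bilateral-order signature on bispecial factors: $\b(w)=0$ when $w$ is non-palindromic, and $\b(w)=\#\Pext(w)-1$ when $w$ is a palindrome.

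For the direction ($\P \Rightarrow$ richness), I would start from an infinite word $\u$ with language closed under reversal satisfying $\C$ and $\P$, apply Theorem~\ref{th:mr2} to extract the bilateral-order conditions on all BS factors, and then invoke Theorem~\ref{thm:minus1} (whose hypothesis is exactly closed-under-reversal) to conclude that $\u$ is rich. For the reverse direction (richness $\Rightarrow \P$), I proceed symmetrically: Theorem~\ref{thm:minus1} converts richness into the bilateral-order conditions, and Theorem~\ref{th:mr2} turns them back into Property $\P$.

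In fact a more economical route bypasses the BS machinery entirely and would be worth presenting if brevity is preferred. By Theorem~\ref{rich_opulent}, under closure under reversal richness is equivalent to $\P(n+1)+\P(n)=\Delta \C(n)+2$ for every $n\in\N$. Property $\C$ forces $\Delta\C(n)=k-1$ for all $n$, so this equality collapses to $\P(n+1)+\P(n)=k+1$, which is precisely Property $\P$. The equivalence is then immediate.

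I do not foresee a genuine obstacle: all substantive work has been done in Theorems~\ref{rich_opulent}, \ref{thm:minus1}, and \ref{th:mr2}. The only point requiring a moment of care is checking that Property $\C$ supplies $\Delta\C(n)=k-1$ for \emph{every} $n\in\N$ starting from $n=0$ (using $\C(0)=1$), so that the equivalence $\P(n+1)+\P(n)=k+1$ holds uniformly in $n$ and matches the statement of Property $\P$ without an initial-value discrepancy.
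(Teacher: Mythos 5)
Your first route is exactly the paper's: the corollary is stated there with the one-line justification that Theorem~\ref{th:mr2} ``may be immediately reformulated using Theorem~\ref{thm:minus1}'', i.e.\ both theorems characterize their respective properties by the identical bilateral-order signature on bispecial factors, so the equivalence follows by composing them. Your alternative route via Theorem~\ref{rich_opulent} is also correct and is in fact the shorter argument: closure under reversal makes richness equivalent to $\P(n+1)+\P(n)=\Delta\C(n)+2$, and Property $\C$ gives $\Delta\C(n)=k-1$ for all $n\in\N$ (including $n=0$, since $\C(0)=1$ and $\C(1)=k$), collapsing that equality to Property $\P$. This bypasses the bilateral-order machinery entirely; it is essentially the computation already embedded in the proofs of Corollary~\ref{PCrich} and Theorem~\ref{th:mr2}, so nothing is lost, and your point about checking the $n=0$ case is the only detail worth spelling out. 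Either write-up is acceptable.
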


Non-palindromic bispecial factors can really occur in infinite words with the language closed under reversal and satisfying Properties $\C$ and $\PE$, thus $\P$ as well.
This means that there exist rich words with non-palindromic BS factors.

\begin{example} \label{ex:rich_non_pal_BS}
A~ternary word with such properties is ${\mathbf v}=\pi(\u), $ where $\u=\varphi^2(\u)$ and
$$\varphi:A \to CAC, \ B \to CACBD, \ C \to BDBCA , \ D \to BDB, $$
$$\pi: A \to ba, \ B \to b, \ C \to a, \ D \to abc.$$

The substitution $\varphi$ satisfies for any letter $x \in \left \{ A,B,C,D \right \}$,
if we cut off the last two letters of $\varphi^{2n}(x)$, we get a~palindrome.
Together with the uniform recurrence of $\u$, Theorem~\ref{unif_rec_pal} implies that the language $\L$ is closed under reversal.
Every LS factor of $\u$ is a~prefix of $\varphi^{2n}(B)$ or $\varphi^{2n}(C)$ for some $n \in \N$, consequently, $\Delta\C(n)=2$ for all $n \in \N,\ n\geq 1$.

For every non-empty palindrome $w \in \L$, its morphic image $\pi(w)$ without first two letters is a~palindrome.
As $\mathbf v$ contains infinitely many distinct palindromes and is a~morphic image of a~uniformly recurrent word, thus uniformly recurrent, too, the language ${\mathcal L}(\mathbf v)$ is closed under reversal.
The word $\mathbf v$ has two infinite LS branches: every LS factor of $\mathbf v$ is either a~prefix of $\pi(\varphi^{2n}(B))$ or of $\pi(\varphi^{2n}(C))$. Therefore, $\mathbf v$ satisfies Property $\C$. Moreover, $\mathbf v$ contains only ordinary BS factors.
Applying Theorem~\ref{th:mr1}, Property $\PE$ holds as well.
Remark that the factor $ba$ is a~non-palindromic BS factor of~$\mathbf v$.
\end{example}

\subsection{Balance properties}

It is a~direct consequence of the definition that
\begin{equation}\label{ACB}
\AC \ \Rightarrow  \ \B_{\forall} \ \Rightarrow \ \B_{\exists}.
\end{equation}
The first implication follows from the fact that if there are two factors $v,w$ of the same length that contain a~distinct number of letters $a$, say $l$ and $r$, then there exist factors containing any number of letters $a$ between $l$ and $r$ (they may be found in any factor having $v$ as its prefix and $w$ as its suffix).

Let us point out that our favorite generalizations of Sturmian words, namely AR words and $k$-iet words, violate the property $\B_{\forall}$.
The paper~\cite{CaFeZa} provides a~construction of an AR word $\u$ that is not $c$-balanced for any $c$. The same property have also all $3$-iet words given by the transformation $T$ associated with the symmetric permutation and verifying the property i.d.o.c., which can be shown using methods from~\cite{Ad}.

It is natural to ask whether infinite words on multiliteral alphabets with Property $\AC$ exist.
A~recent answer has been provided in~\cite{CuRa}: there are no infinite words satisfying $\AC$
on alphabets containing more than 3 letters.
On the other hand, there exist ternary infinite words with Property $\AC$ as shown by the example taken from~\cite{RiSaZa2}.
\begin{example} \label{ex:psi}
Let $\mathbf v$ be any aperiodic infinite word on $\{A,B\}$ and put $\u=\pi(\mathbf v)$,
where $\pi$ is the morphism defined by $\pi(A)=abc, \ \pi(B)=acb$. Then $\AC(n)=3$
for all $n \in \mathbb N, \ n \geq 1$.
\end{example}
A~more general theorem has been proved ibidem.
\begin{thm}
If an aperiodic uniformly recurrent infinite word $\u$ on a~ternary alphabet is 1-balanced, then $\u$ has Property $\AC$.
\end{thm}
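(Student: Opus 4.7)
My plan is to prove $\AC(n) = 3$ for every $n \geq 1$ by sandwiching: 1-balance forces $\AC(n) \leq 3$, and aperiodicity combined with uniform recurrence excludes $\AC(n) \in \{1,2\}$.

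First, for the upper bound, the 1-balance hypothesis says that for each letter $a \in \A$ the coordinate $|w|_a$ takes at most two consecutive values $m_a, m_a+1$ as $w$ varies over $\mathcal{L}_n(\u)$. Writing a Parikh vector as $\Psi(w) = (m_{a_i}+\epsilon_i)_{i=1,2,3}$ with $\epsilon_i \in \{0,1\}$ and imposing $\sum_i(m_{a_i}+\epsilon_i) = n$, the quantity $t := \sum_i \epsilon_i$ is fixed, so at most $\binom{3}{t} \leq 3$ Parikh vectors are admissible.

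For the lower bound, the case $n=1$ is immediate from $\AC(1) = \#\A = 3$, since $\u$ contains every letter. For $n \geq 2$, the case $\AC(n) = 1$ is excluded by a telescoping argument: the equality $\Psi(u_i\cdots u_{i+n-1}) = \Psi(u_{i+1}\cdots u_{i+n})$ forces $u_i = u_{i+n}$ for every $i$, contradicting aperiodicity. The subtle case is $\AC(n) = 2$: the difference of the two Parikh vectors lies in $\{-1,0,1\}^3$ with zero coordinate-sum, hence is a permutation of $(+1,-1,0)$. Thus exactly one letter, call it $c$, has the same count $m_c$ in every factor of length $n$. Applying the same telescoping argument to the $c$-coordinate gives $u_i = c \Leftrightarrow u_{i+n} = c$, so the set of positions of $c$ in $\u$ is periodic with period $n$.

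The main obstacle is to derive a contradiction from this rigid $c$-scaffolding. My plan is to pass to the binary word $\mathbf{v} \in \{a,b\}^{\omega}$ obtained from $\u$ by erasing all occurrences of $c$. The periodicity of the $c$-positions provides a bounded-distortion correspondence between long factors of $\u$ and long factors of $\mathbf{v}$, from which I would verify that $\mathbf{v}$ inherits uniform recurrence, aperiodicity and 1-balance from $\u$. By the Hedlund--Morse characterisation recalled in Theorem~\ref{eq_sturm}, $\mathbf{v}$ is then Sturmian, hence has an irrational letter frequency $\mu$. Since $(\mu q)_{q\in\mathbb{N}}$ is equidistributed modulo $1$, the discrepancy $\lceil \mu(q+1)\rceil - \lfloor\mu q\rfloor$ attains the value $2$ for infinitely many $q$. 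Choosing a length $n' = qn + r$ so that length-$n'$ windows of $\u$ starting at different residues modulo $n$ capture $\mathbf{v}$-factors whose lengths differ by $1$, the resulting $a$-counts in $\u$ assume at least three distinct values, contradicting the 1-balance of $\u$ on letter $a$. The hardest step is precisely this last one: carefully aligning the periodic $c$-scaffolding with the Sturmian fluctuations of $\mathbf{v}$ so as to manufacture the required failure of 1-balance.
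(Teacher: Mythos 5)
The paper does not actually prove this statement: it is imported verbatim from \cite{RiSaZa2}, so there is no internal proof to compare against and your attempt must be judged on its own. The first three steps are correct and essentially complete. The upper bound $\AC(n)\le 3$ via the fixed total weight $t=\sum_i\epsilon_i$ is fine; $\AC(n)=1$ telescopes to $u_i=u_{i+n}$ and contradicts aperiodicity; and when $\AC(n)=2$ the difference of the two Parikh vectors is a nonzero vector of $\{-1,0,1\}^3$ with zero sum, hence a permutation of $(1,-1,0)$, so one letter $c$ has constant count on $\mathcal{L}_n(\u)$ and the set of $c$-positions is a union of residue classes modulo $n$. Up to this point nothing is missing.

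The endgame, however, is a plan rather than a proof, and it has two genuine gaps. First, the assertion that the erased word $\mathbf v$ inherits $1$-balance is true but not routine: two factors of $\mathbf v$ of the same length $m$ lift to windows of $\u$ of \emph{different} lengths, and after padding one window to equalize the lengths the naive estimate only yields $2$-balance of $\mathbf v$. To recover $1$-balance one must play the three letters against each other: if the padded window gains one extra non-$c$ letter, then the $a$- and $b$-discrepancies sum to $-1$, and the bound $1$ on the $b$-discrepancy excludes the case where the $a$-counts would differ by $2$. This computation has to appear; without it the passage to a Sturmian $\mathbf v$ via Theorem~\ref{eq_sturm}(vii) is unjustified. (Note that aperiodicity plus $1$-balance already suffices there, so uniform recurrence of $\mathbf v$ is not actually needed.) Second, the final alignment step --- which you yourself flag as the hardest --- is not carried out, and it needs more than equidistribution of $(\mu q)_q$: you must show (a) that some $M$ with $\lfloor(M+1)\mu\rfloor=\lfloor M\mu\rfloor+1$ is realized as the minimal non-$c$ count of length-$n'$ windows for an $n'$ at which this count is non-constant, and (b) that the extremal $a$-counts $\lfloor M\mu\rfloor$ and $\lfloor(M+1)\mu\rfloor+1$ are attained by $\mathbf v$-factors starting in the \emph{specific} residue classes modulo $n-m_c$ corresponding to the two window types; (b) requires equidistribution of the coding rotation along arithmetic progressions of common difference $n-m_c$. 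Both gaps appear repairable, but as written the argument stops exactly where the theorem's content begins.
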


Let us underline in the following examples that none of the implications in~\eqref{ACB} can be reversed.
The first example comes from~\cite{RiSaZa} and the second one is taken from~\cite{Tu}.
\begin{example}[$\B_{\forall} \not \Rightarrow \AC$]\label{ex:Tribonacci}
The ternary Tribonacci word -- the fixed point of the substitution $\varphi: a \to ab, \ b \to ac, \ c \to a$ -- is $2$-balanced, however its abelian complexity reaches five values: $3,4,5,6,7$.
Notice that the Tribonacci word belongs to AR words, which satisfy Properties $\C, \LR, \BO, \Ret, \P, \PE$.
\end{example}

\begin{example}[$\B_{\exists} \not \Rightarrow \B_{\forall}$] \label{ex:beta_coding}
The fixed point $\u$ of the substitution $\varphi: a \to aab, \ b \to c, \ c \to ab$ has the following properties (shown in~\cite{Tu}):
\begin{itemize}
\item for any factors $v,w \in \L$ with $|v|=|w|$, it holds
$$||v|_x-|w|_x|\leq 2 \quad \text{if $x\in \{b,c\}$,}$$
\item there exist $v,w \in \L$ with $|v|=|w|$ such that
$$||v|_a-|w|_a|=3.$$
\end{itemize}
Thus, $\u$ has Property~$\B_{\exists}$.
The word $\u$ is a~coding of distances between neighboring $\beta$-integers, where $\beta$ is the largest root of the polynomial $x^3-2x^2-x+1$.
It is moreover known (see \cite{FrMaPe}) to verify Property $\BO$, but not $\LR$. Theorem~\ref{BOImplRet} implies that $\u$ has Property $\Ret$ as well.
Its language is not closed under reversal, consequently, neither $\PE$ nor $\P$ holds.
\end{example}

Generally, it is difficult to decide whether an infinite word has Property $\B_{\exists}$ or $\B_{\forall}$.
A~slightly simpler problem is to study infinite words that are $c$-balanced for some $c$.
The criterion for existence of such a~constant $c$ for fixed points of a~primitive substitution has been provided in~\cite{Ad2},
observing the spectra of adjacent matrices of substitutions. In general, it is however impossible to determine the minimal value of $c$
from the spectrum. To our knowledge, besides the ternary words considered in Examples~\ref{ex:Tribonacci} and~\ref{ex:beta_coding}, the only non-sturmian
fixed points of primitive substitutions, for which the minimal value of $c$ is known, have been examined in~\cite{BaPeTu} and~\cite{Tu2}.

\section{Overview of relations and examples}

\begin{figure}[h!]

\SelectTips{cm}{12}
\[
\xymatrix@C=2.5cm@R=2cm@M=0.4cm{
\mathcal{BO} \ar@/^5pc/@{->}[rrd]|{\text{CuR}} \ar@/^0.8pc/@{->>}[r]
\ar@/_1.1pc/@{<->}[d]|{\text{CuR}, \mathcal{C}}
\ar@/^1.5pc/@{<-}[rd]|{3} \ar@/^0.3pc/@{<-}[rd]^{4} |-{\object@{/}}
\ar@/_1.1pc/@{->}[rd]|{UR} & \mathcal{C}
\ar@/^1pc/@{->}[rd]|{\text{CuR}, 3} \ar@/_0.1pc/@{->}[d]
|-{\object@{/}} \ar@/_1pc/@{<-}[rd] |-{\object@{/}} & \\
\mathcal{PE} \ar@/^2.5pc/@{->}[ur] |-{\object@{/}}
\ar@/^0.4pc/@{<-}[r]|{3, \text{CuR}} \ar@/_3pc/@{->}[rr] |-{\object@{/}}
\ar@{->}[d] |-{\object@{/}} \ar@/_0.4pc/@{->}[r] |-{\object@{/}}
\ar@/_1.3pc/@{->}[r]|{\text{UR}, \mathcal{C}} \ar@/_1pc/@{->>}[rd] &
\mathcal{R} \ar@/_0.7pc/@{->}[d] |-{\object@{/}} & rich
\ar@/^/@{<->}[l]|{\text{UR}, \mathcal{PE}} \ar@/^/@{->}[dl]
|-{\object@{/}}\\
\text{CuR} & \mathcal{P} \ar@{<->}[r]|{\text{CuR}, \mathcal{C}} &
{\mathcal{BO}_p} \ar@/_0.1pc/@{<->}[u]|{\text{CuR}} }
\]

\vspace{\baselineskip}

\[ \xymatrix@R=0.3\baselineskip{
\ar@{->}[rr] & & & \text{ implication} \\
\ar@{->>}[rr] & & & \text{ irreversible implication} \\
\ar@{<->}[rr] & & & \text{ equivalence} \\
\ar@{->}[rr] |-{\object@{/}} & & & \text{ invalid implication} \\
& \text{number} & & \# \A \\
& \text{UR} & & \text{ uniform recurrence} \\
& \text{CuR} & & \text{ closeness under reversal} \\
& \mathcal{BO}_p & & \b(w) = \left\{ \substack{\displaystyle \# \Pext
(w) - 1 \qquad \text{ if $w$ is bispecial palindromic } \\ \displaystyle
0 \qquad \qquad \qquad \qquad \text{otherwise} \qquad \qquad \qquad
\qquad } \right. \\
}
\]
\caption{Diagram of known relations (assumptions are marked as labels of
arrows)}
\label{diag_relace}
\end{figure}

\crefname{example}{ex.}{ex.}
\newcommand{\Xref}[1]{ex. \ref{#1} on p. \pageref{#1}}

\begin{table}[h!]
\centering
\begin{tabular}[width=\textwidth]{|p{0.35\textwidth}|p{0.4\textwidth}|p{0.15\textwidth}|}
\hline
\textbf{word} & \textbf{properties} & \textbf{reference} \\ \hline \hline
$u_0=ab$, $u_{n+1}=u_n ab \overline{u_n}$ & uniformly recurrent, closed under reversal, finite number of palindromes & \Xref{ex:finite_pals}, \cite{BeBoCaFa} \\ \hline

$ u_0=\varepsilon$, $u_{n+1}=u_n ab c^{n+1} u_n$ & recurrent, $\infty$-many palindromes, not closed under reversal & \Xref{ex:pals_not_closed} \\ \hline

$a \to ab$, $b \to cab$, $c \to ccab$ & $\C$, not $\BO$, not $\Ret$ & \Xref{ex:recoded_chacon}, \Xref{ex:ter_not_R}, \cite{Fer} \\ \hline

$\varphi$: $A \to AB$, $B \to A$; $\pi$: $A \to a$, $B \to bc$ & $\LR$, not closed under reversal, finite number of palindromes, not $\C$, not $\Ret$ & \Xref{ex:lrlr} \\ \hline

$a \to aab$, $b \to ac$, $c \to a$ & $\Ret$, not closed under reversal & \Xref{ex:bapest}, \cite{BaPeSt} \\ \hline

$a \to acbca$, $b \to acbcadbdaca$, $c \to dbcbdacadbd$, $d \to dbcbd$ & $\Ret$, closed under reversal, not $\C$, not $\P$ & \Xref{ex:ter_R_not_C}, \Xref{ex:w2}, \cite{BaPeSt} \\ \hline

$\u=ca\underbrace{cc}_{2\times}b\underbrace{ccc}_{3\times}a\underbrace{cccc}_{4\times}b\underbrace{ccccc}_{5\times}a\dots$ & $\infty$-many palindromes, not closed under reversal, not rich & \Xref{ex:inf_pals_not_rec} \\ \hline

billiard sequence on three letters & closed under reversal, $\PE$, not $\C$, not rich  & \Xref{ex:bo}, \cite{Bo} \\ \hline
$a \to aab$, $b \to aac$, $c \to aa$ &  rich, not $\C$, not $\P$ & \Xref{ex:rovnost_bez_C}, \cite{AmMaPeFr} \\

\hline
$a \to aba$, $b \to cac$, $c \to aca$ & closed under reversal, $\PE$, not $\C$, not $\Ret$ & \Xref{ex:ter_PE_not_C}, \Xref{ex:no_Ret}, \cite{BaPeSta} \\ \hline

$\varphi$: $A \to ABBABBA$, $B \to ABA$; $\pi$: $A \to bc$, $B \to baa$ & closed under reversal, $\C$,$\P$, not $\PE$ & \Xref{ex:ter_P_not_PE}, \cite{BaPeSta} \\ \hline

$(abcba)^{\omega}$ &  rich, not $\C$ & \Xref{ex:rich_bounded_comp} \\ \hline

$a \to aab$, $b \to ac$, $c \to a$ & $\C$, $\Ret$, not closed under reversal & \Xref{ex:C_Ret_not_Cur}, \cite{BaPeSt} \\ \hline

$\varphi$: $A \to CAC$, $B \to CACBD$, $C \to BDBCA$, $D \to BDB$; $\pi$: $A \to ba$, $B \to b$, $C \to a,$ $D \to abc$ & $\PE$, $\C$, closed under reversal, rich, contains non-palindromic BS factors & \Xref{ex:rich_non_pal_BS} \\ \hline

$\u = \pi(\mathbf{v})$, $\pi$: $A \to abc$, $B \to acb$, $\mathbf{v}$ is an aperiodic word over $\{A,B\}$ & $\AC$ & \Xref{ex:psi}, \cite{RiSaZa2} \\ \hline

$a \to ab$, $b \to ac$, $c \to a$ & $\LR, \BO, \Ret, \PE$, $\B_{\forall}$, not $\AC$ & \Xref{ex:Tribonacci}, \cite{RiSaZa} \\ \hline

$a \to aab$, $b \to c$, $c \to ab$ &  $\B_{\exists}$, not $\B_{\forall}$, not closed under reversal, $\BO$, not $\LR$, $\Ret$ & \Xref{ex:beta_coding}, \cite{Tu} \\ \hline

\end{tabular}

\caption{Example overview}
\label{table:overview}
\end{table}

In this section we provide a brief overview of relations and examples presented in the paper.
Most of the relations are depicted in Figure \ref{diag_relace}.
Examples are listed in Table~\ref{table:overview}.
The word is either a fixed point of the given substitution,
the image by the morphism $\pi$ of a fixed point of the substitution $\varphi$,
the limit of the sequence $\left (u_n \right )$ or otherwise specified.

\section{Acknowledgements}
We would like to thank Pierre Arnoux for his valuable remarks and careful reviewing.
We acknowledge financial support by the Czech Science Foundation grant 201/09/0584 and
by the grants MSM6840770039 and LC06002 of the Ministry of Education, Youth, and Sports
of the Czech Republic.

\newpage



\begin{thebibliography}{99}
\bibitem{Ad} B. Adamczewski, {\em  Codages de rotations et ph\'enomenes d'autosimilarit\'e}, J. Th\'eor. Nombres Bordeaux {\bf 14} (2002), 351--386
\bibitem{Ad2} B. Adamczewski, {\em Balances for fixed points of primitive substitutions}, Theoret. Comput. Sci. {\bf 307} (2003), 47--75
\bibitem{ABCD}  J. P. Allouche, M. Baake, J. Cassaigne, D. Damanik, {\em Palindrome complexity}, Theoret. Comput. Sci. {\bf 292} (2003), 9--31
\bibitem{AmMaPeFr} P. Ambro\v z, Ch. Frougny, Z. Mas\'akov\'a, E. Pelantov\'a,
{\em Palindromic complexity of infinite words associated with simple
Parry numbers}, Ann. Inst. Fourier {\bf 56} (2006), 2131--2160
\bibitem{ArMaShTa} P. Arnoux, C. Mauduit, I. Shiokawa, J.-I. Tamura, {\em Complexity of sequences defined by billiards in the cube}, Bull. Soc. Math. France {\bf 122} (1994), 1--12
\bibitem{ArRa} P. Arnoux, G. Rauzy, {\em Repr\'esentation g\'eom\'etrique de suites de complexit\'e $2n+1$}, Bull. Soc. Math. France {\bf 119} (1991), 199--215
\bibitem{BaMaPe} P. Bal\'a\v zi, Z. Mas\'akov\'a,
E. Pelantov\'a, {\em Factor versus palindromic complexity of uniformly recurrent infinite words}, Theoret. Comput. Sci. {\bf 380} (2007), 266--275
\bibitem{BaPeSta} L{\!'}. Balkov\'a, E. Pelantov\'a, Š. Starosta, {\em Palindromes in infinite ternary words}, RAIRO - Theor. Inf. Appl. {\bf 43} (2009), 687--702
\bibitem{BaPeSt} L{\!'}. Balkov\'a, E. Pelantov\'a, W. Steiner, {\em Sequences with constant number of return words}, Monatsh. Math. {\bf 155(3-4)} (2008), 251--263
\bibitem{BaPeTu} L{\!'}. Balkov\'a, E. Pelantov\'a, O. Turek, {\em Combinatorial and arithmetical properties of infinite words associated with quadratic non-simple Parry numbers}, RAIRO - Theor. Inf. Appl. {\bf 41} (2007), 307--328
\bibitem{Barysh} Y. Baryshnikov, {\em Complexity of trajectories in rectangular billiards}, Comm. Math. Phys. {\bf 174} (1995), 43--56
\bibitem{Be} J. Berstel, {\em Recent results on extensions of Sturmian words}, Int. J. Algebra Comput. {\bf 12} (2002), 371--385
\bibitem{BeBoCaFa} J. Berstel, L. Boasson, O. Carton, I. Fagnot, {\em Infinite words without palindromes}, arXiv:0903.2382 (2009)
\bibitem{Bo} J. P. Borel, {\em Complexity and palindromic complexity of billiards words}, in: S. Brlek, C. Reutenauer (Eds.), Proceedings of WORDS 2005 (2005), 175–-183
\bibitem{BrHaNiRe} S. Brlek, S. Hamel, M. Nivat, C. Reutenauer, {\em On the palindromic complexity of infinite words},
Int. J. Found. Comput. Sci. {\bf 2} (2004), 293--306
\bibitem{BuLuGlZa} M. Bucci, A. De Luca, A. Glen, L. Q. Zamboni, {\em A connection between palindromic and factor complexity using return words}, Adv. in Appl. Math {\bf 42} (2009), 60--74
\bibitem{BuLuGlZa2} M. Bucci, A. De Luca, A. Glen, L. Q. Zamboni, {\em A new characteristic property of rich words}, Theoret. Comput. Sci. {\bf 410} (2009), 2860--2863
\bibitem{Ca} J. Cassaigne, {\em Complexity and special factors}, Bull. Belg. Math. Soc. Simon Stevin 4 {\bf 1} (1997), 67--88
\bibitem{CaFeZa} J. Cassaigne, S. Ferenczi, L. Q. Zamboni, {\em Imbalances in Arnoux-Rauzy
sequences}, Ann. Inst. Fourier {\bf 50} (2000),
1265--1276
\bibitem{CoHe} E. M. Coven, G. A. Hedlund, {\em Sequences with minimal block growth}, Math. Systems Theory {\bf 7} (1973), 138--153
\bibitem{CuRa} J. Currie, N. Rampersad, {\em Recurrent words with constant Abelian complexity}, arXiv:0911.5151 (2009)
\bibitem{DrPi} X. Droubay, G. Pirillo, {\em Palindromes and Sturmian words}, Theoret.
Comput. Sci. {\bf 223} (1999), 73--85
\bibitem{DrJuPi} X. Droubay , J. Justin , G. Pirillo, {\em Episturmian
words and some constructions of de Luca and Rauzy}, Theoret. Comput.
Sci. {\bf 255} (2001), 539--553
\bibitem{Du} F. Durand, {\em A characterization of substitutive sequences using
return words}, Discrete Math. {\bf 179} (1998), 89--101
\bibitem{FaVu} I. Fagnot, L. Vuillon, {\em Generalized balances in Sturmian words}, Discrete Applied Mathematics {\bf 121} (2002), 83--101
\bibitem{Fer} S. Ferenczi, {\em Les transformations de Chacon:
combinatoire, structure g\'eom\'etrique, lien avec les syst\`emes de
complexit\'e $2n+1$}, Bull. Soc. Math. Fr. {\bf 123} (1995), 271--292
\bibitem{FeZa} S. Ferenczi, L. Zamboni, {\em Languages of $k$-interval exchange transformations
}, Bull. London Math. Soc. {\bf 40(4)} (2008), 705--714
\bibitem{FrMaPe} C. Frougny, Z. Mas\'akov\'a, E. Pelantov\'a, {\em Complexity of infinite words associated with beta-expansions}, RAIRO - Theor. Inf. Appl. {\bf 38} (2004), 162--184
\bibitem{GlJu} A. Glen, J. Justin, {\em Episturmian words: a survey}, RAIRO - Theor. Inf. Appl. {\bf 43} (2009), 403--442
\bibitem{GlJuWiZa} A. Glen, J. Justin, S. Widmer, L. Q. Zamboni, {\em Palindromic richness}, Eur. J. Comb. {\bf 30} (2009), 510--531
\bibitem{HeMo1} G. A. Hedlund, M. Morse, {\em Symbolic dynamics}, Amer. J. Math.  {\bf 60}  (1938), 815--866
\bibitem{HoKnSi} A. Hof, O. Knill, B. Simon, {\em Singular continuous spectrum for
palindromic Schr\"{o}odinger operators}, Comm. Math. Phys. {\bf 174}
(1995), 149--159
\bibitem{HoZa} C. Holton, L. Q. Zamboni, {\em Geometric realizations of substitutions}, Bull. Soc.
Math. France {\bf 126} (1998), 149–-179
\bibitem{JuPi} J. Justin, G. Pirillo, {\em Episturmian words and episturmian morphisms}, Theoret. Comput. Sci. {\bf 276} (2002), 281--313
\bibitem{JuVu} J. Justin, L. Vuillon, {\em Return words in Sturmian and episturmian
words}, RAIRO - Theor. Inf. Appl. {\bf 34} (2000), 343--356
\bibitem{Ke} M. S. Keane, {\em Interval exchange transformations}, Math. Z. {\bf 141} (1975), 25-–31
\bibitem{Lo2} M. Lothaire, {\em Algebraic combinatorics on words}, volume 90 of Encyclopedia of Mathematics and its Applications, Cambridge University Press, 2002
\bibitem{MaPe} Z. Mas\'akov\'a, E. Pelantov\'a, {\em Relation between powers of factors and the recurrence function characterizing Sturmian words}, Theoret. Comput. Sci. {\bf 410 (38-40)} (2009), 3589--3596
\bibitem{HeMo} M. Morse, G. A. Hedlund, {\em Symbolic dynamics II - Sturmian trajectories}, Amer. J. Math.  {\bf 62} (1940), 1--42
\bibitem{Ra} G. Rauzy, {\em Echanges d'intervalles et transformations induites}, Acta Arith. {\bf 34} (1979), 315--328
\bibitem{Ri} G. Richomme, {\em Another characterization of Sturmian words (one more)}, Bull. Eur. Assoc. Theor. Comput. Sci. EATCS {\bf 67} (1999), 173--175
\bibitem{RiSaZa2} G. Richomme, K. Saari, L. Q. Zamboni, {\em Abelian complexity in minimal subshifts},	arXiv:0904.2925v1 (2009)
\bibitem{RiSaZa} G. Richomme, K. Saari, L. Q. Zamboni, {\em Balance and abelian complexity of the Tribonacci word}, arXiv:0911.2914 (2009)
\bibitem{Ro} G. Rote, {\em Sequences with subword complexity $2n$}, J. Number Th. {\bf 46} (1993), 196--213
\bibitem{SmUl} J. Smillie, C. Ulcigrai, {\em Symbolic coding for linear trajectories in the regular octagon}, arXiv:0905.0871v1 (2009)
\bibitem{Ta} S. Tabachnikov, {\em Billiards}, Panoramas et synth\`{e}se, SMF, Num\'ero 1, 1995
\bibitem{Tu} O. Turek, {\em Balances and Abelian complexity of a certain class of infinite ternary words}, preprint Kochi University of Technology, 25 pp. (2010)
\bibitem{Tu2} O. Turek, {\em Balance properties of the fixed point of the substitution
associated to quadratic simple Pisot numbers}, RAIRO - Theor. Inform. Appl. {\bf 41} (2007), 123--135
\bibitem{Vu} L. Vuillon, {\em A characterization of Sturmian words by
return words}, Eur. J. Comb. {\bf 22} (2001),  263--275
\bibitem{Vu2} L. Vuillon, {\em Balanced words}, Bull. Belg. Math. Soc. Simon Stevin {\bf 10} (2003), 787--805
\bibitem{VuIET} L. Vuillon, {\em On the number of return words in
infinite words with complexity $2n+1$}, LIAFA Research Report 2000/15
\end{thebibliography}
\end{document}